\documentclass[11pt]{article}

\usepackage{geometry,amsmath,bbm,bm,amsthm}
\usepackage[utf8]{inputenc}
\usepackage[english]{babel}
\usepackage{parskip}
\usepackage{setspace}
\usepackage{xcolor}
\usepackage{dsfont}
\usepackage[numbers,sort&compress]{natbib}
\usepackage{mathrsfs} 
\usepackage{scalefnt}
\usepackage{booktabs}
\usepackage{multirow}
\usepackage{threeparttable}
\usepackage{url}

\usepackage{makecell}

\allowdisplaybreaks

\usepackage{graphicx}
\usepackage[space]{grffile}

\geometry{margin=1.0in}

\newtheoremstyle{sty} 
  {0.5em} 
  {\topsep} 
  {\itshape} 
  {} 
  {\bfseries} 
  {.} 
  {.5em} 
  {} 
  
\newtheoremstyle{sty2}
  {0.5em} 
  {\topsep} 
  {} 
  {} 
  {\bfseries} 
  {.} 
  {.5em} 
  {} 

\theoremstyle{sty}\newtheorem{theorem}{Theorem}[section]
\theoremstyle{sty2}\newtheorem{definition}{Definition}
\newtheorem{lemma}{Lemma}
\newtheorem*{theorem*}{Theorem}



\renewcommand {\l}{\left(}
\renewcommand {\r}{\right)}

\renewcommand {\v}{\text{Var}}
\newcommand {\E}{\mathbbm{E}}
\renewcommand {\P}{\mathbbm{P}}
\renewcommand {\d}{\text{d}}
\newcommand {\wh}{\widehat}


\newcommand{\er}{Erd\H{o}s-R\'enyi }

\newcommand{\indicator}[1]{\mathds{1}\{ #1 \}}



\title{Estimating the size of a hidden finite set: \\ large-sample behavior of estimators}

\author{Si Cheng$^1$, Daniel J. Eck$^2$, and Forrest W. Crawford$^{3,4,5,6}$ \\[1em]
1. Department of Biostatistics, University of Washington\\
2. Department of Statistics, University of Illinois Urbana-Champaign \\
3. Department of Biostatistics, Yale School of Public Health \\
4. Department of Statistics \& Data Science, Yale University \\
5. Department of Ecology \& Evolutionary Biology, Yale University \\
6. Yale School of Management }


\begin{document}

\maketitle

\begin{abstract}
A finite set is ``hidden'' if its elements are not directly enumerable or if its size cannot be ascertained via a deterministic query. In public health, epidemiology, demography, ecology and intelligence analysis, researchers have developed a wide variety of indirect statistical approaches, under different models for sampling and observation, for estimating the size of a hidden set. Some methods make use of random sampling with known or estimable sampling probabilities, and others make structural assumptions about relationships (e.g. ordering or network information) between the elements that comprise the hidden set.  
In this review, we describe models and methods for learning about the size of a hidden finite set, with special attention to asymptotic properties of estimators.  We study the properties of these methods under two asymptotic regimes, ``infill'' in which the number of fixed-size samples increases, but the population size remains constant, and ``outfill'' in which the sample size and population size grow together.  Statistical properties under these two regimes can be dramatically different. 
\\[1em]
\textbf{Keywords:} capture-recapture, German tank problem, multiplier method, network scale-up method
\end{abstract}






\section{Introduction}

Estimating the size of a hidden finite set is an important problem in a variety of scientific fields.  Often practical constraints limit researchers' access to elements of the hidden set, and direct enumeration of elements may be impractical or impossible.  
In demographic, public health, and epidemiological research, researchers often seek to estimate the number of people within a given geographic region who are members of a stigmatized, criminalized, or otherwise hidden group \citep{bao2015estimating,handcock2014estimating,unaids2010guidelines,abdul2014estimating}.  For example, researchers have developed methods for estimating the number of 
homeless people \citep{killworth1998estimation,david2002estimating}, 
human trafficking victims \citep{shelton2015proposed,van2015estimating}, 
sex workers \citep{johnston2017measuring,khalid2014estimating,johnston2015estimating,karami2017estimating,vuylsteke2017estimating}, 
men who have sex with men \citep{ezoe2012population,paz2011how,khalid2014estimating,wang2015application,wesson2015if,quaye2015critique,johnston2015estimating,sabin2016availability,rich2017estimating}, 
transgender people \citep{mcfarland2017many,sabin2016availability},
drug users \citep{kaplan1993how,hickman2006estimating,kadushin2006scale,heimer2010estimation,salganik2011assessing,nikfarjam2016national,sabin2016availability,johnston2015estimating,hall2000many},  
and people affected by disease \citep{yip1995capture,wittes1968generalization,hook1995capture,robles1988application,karon2008estimating,brookmeyer1988method}.  
In ecology, the number of animals of a certain type within a geographic region is often of interest \citep{seber1973estimation,Corn1984extinction,Hadfield1993decimation,karanth1998estimation}. Effective wildlife protection, ecosystem preservation, and pest control require knowledge about the size of free-ranging animal populations \citep{schwarz1999review,Funk2003,joglar1996declining}.  
In intelligence analysis, military science, disaster response, and criminal justice applications, estimates of the size of hidden sets can give insight into the size of a threat or guide policy responses. Analysts may seek information about the number of 
combatants in a conflict,  
military vehicles \citep{ruggles1947empirical,goodman1952serial}, 
extremists \citep{davies2014framework},
terrorist plots \citep{kaplan2010terror,kaplan2012estimating}, 
war casualties \citep{sadosky2015blocking}, 
people affected by a disaster \citep{bernard2001estimating}, 
and the extent of counterfeiting \citep{wilson2016measuring}.

Statistical approaches to estimating the size of a hidden set fall into a few general categories.  Some approaches are based on traditional notions of random sampling from a finite population \citep{horvitz1952generalization,bickel1992nonparametric}.  Others leverage information about the ordering of units \citep{ruggles1947empirical,goodman1952serial}, or relational information about ``network'' links between units \citep{killworth1998estimation,zheng2006many,bernard2010counting,mccormick2010many,feehan2016generalizing,salganik2011assessing}. Single- or multi-step sampling procedures that involve record collection or ``marking'' of sampled units -- called capture-recapture experiments -- are common when random sampling is possible \citep{chapman1951some,darroch1958multiple,fienberg1972multiple,seber1973estimation,pollock1990statistical,hickman2006estimating}.  Sometimes exogenous, or population-level data can help: when the proportion of units in the hidden set with a particular attribute is known \emph{a priori}, then the proportion with that attribute in a random sample can be used to estimate the total size of the set \citep{zhang2007advantages,zhang2007estimating,kimber2008estimating,heimer2010estimation,quaye2015critique,safarnejad2017population}.  Still other methods use features of a dynamic process, such as the arrival times of events in a queueing process, to estimate the number of units in a hidden set \citep{kaplan2010terror,kaplan2012estimating}.  

Alongside these practical approaches, corresponding theoretical results provide justification for particular study designs and estimators, based on large-sample (asymptotic) arguments.  Guidance for prospective study planning often depends on asymptotic approximation. For example, sample size calculation may be based on asymptotic approximation if the finite-sample distribution of an estimator is not identified or hard to analyze \citep{cochran1977sampling, daniel1999biostatistics, lwanga1991sample}. In retrospective analysis of data and the comparison of statistical approaches, researchers may choose estimators based on large-sample properties like asymptotic unbiasedness, efficiency and consistency if closed-form expressions for finite-sample biases and variances are hard to derive \citep{witte1999asymptotic, eubank1992asymptotic}. 
Claims about the large-sample performance of estimators depend on specification of a suitable asymptotic regime, and it is well known that estimators can perform differently under different asymptotic regimes. Asymptotic theory in spatial statistics provides some perspective on what it means to obtain more data from the same source: informally, an ``infill'' asymptotic regime assumes a bounded spatial domain, with the distance between data points within this domain going to zero.  
An ``increasing domain'' or ``outfill'' asymptotic regime assumes that the minimum distance between any pair of points is bounded away from zero, while the size of the domain increases as the sample size increases.   The latter is usually the default asymptotic setting considered by researchers studying the properties of spatial smoothing estimators \citep{lahiri1996inconsistency,mardia1984maximum,cressie1993asymptotic}.  However, under infill asymptotics, these desirable asymptotic properties of smoothing estimators often do not hold: even when consistency is guaranteed, the rate of convergence may be different \citep{cressie2015statistics, stein2012interpolation, lahiri1996inconsistency, zhang2004inconsistent, chen2000infill}.  When the size of the population from which the sample is drawn is the estimand of interest, intuition about large-sample properties of estimators can break down, but a similar asymptotic perspective is useful in studying the properties of estimators for the size of a hidden set: an infill asymptotic regime takes the total population size to be fixed, while the number of samples from this population increases; the outfill regime permits the sample size and population size to grow to infinity together.  



In this paper, we review models and methods for estimating the size of a hidden finite set in a variety of practical settings.  
First we present a unified characterization of set size estimation problems, formalizing notions of size, sampling, relational structures, and observation.  We then introduce the non-asymptotic regime in which sample size tends to the population size, and define the ``infill'' and ``outfill'' asymptotic regimes in which the sample size and population size may increase.  
We investigate a range of problems, query models, and estimators, including the German tank problem, failure time models, the network scale-up estimator, the Horvitz-Thompson estimator, the multiplier method, and capture-recapture methods. We characterize consistency and rates of estimation errors for these estimators under different asymptotic regimes. 
We conclude with discussion of the role of substantive and theoretical considerations in guiding claims about statistical performance of estimators for the size of a hidden set.

\section{Setting and notation}

\subsection{Hidden sets}

Let $U$ be a set consisting of all elements from a specified target population. In general, $U$ can be discrete or continuous.  Let $\mu(\cdot)$ be a measure defined on $U$ such that $\mu(U)<\infty$.  The \emph{size} of $U$ is $\mu(U)$.  We call $U$ a \emph{hidden} set if the members of $U$ are not directly enumerable, or if its size $\mu(U)$ cannot be ascertained from a deterministic query.  When $U$ is a finite set of discrete elements, $\mu(U)=|U|:=N$ is the cardinality of $U$. 


We seek to learn about the size of $U$ by sampling its elements.  Define a probability space $(U,\mathcal F, \P)$, where $\mathcal F$ is a $\sigma$-field, and $\P$ is a probability measure on $(U,\mathcal F)$. 
The measure $\P$ represents a probabilistic query mechanism by which we may draw subsets of the elements of $U$.  For each possible sample $s\in\mathcal F$, defining $\P(s)$ gives a notion of \emph{random sampling}. Sequential sampling designs can be specified by defining the sequential sampling probabilities $\P(S_i=s_i|s_1,\ldots,s_{i-1})$. Sequential samples are denoted as $\bm s=(s_1, \ldots, s_k)$, and the sample size is defined as $|s_1|+\cdots+|s_k|$, the sum of the cardinality of each sample, which can be larger than $\mu(U)$ under with-replacement sampling. An estimator $\delta(\bm s)$ of $\mu(U)=N$ is a functional of $\mathcal F$ onto $\mathbbm R^+$ or $\mathbbm N$. 

Elements of the hidden set $U$, or of a sample $s$ from $U$, may have attributes, labels, or relational structures that permit estimation of $\mu(U)$ from a subset.  An element $i\in U$ may be labeled or have attributes $X_i$, which may be continuous, discrete, unordered, or ordered.  The elements of $U$ may be connected via a relational structure, such as a graph $G=(U,E)$, where the vertex set is $U$, and edges $\{i,j\}\in E$ represent relationships between elements.  Alternatively, the sampling mechanism may impose a structure on the elements of a sample: if $s_1\subseteq U$ and $s_2\subseteq U$ are samples from $U$, then the intersection $M=s_1 \cap s_2$ is the set of elements in both samples. An \emph{observation} on the sample $\bm s$ consists of statistics that reflect these attributes, labels or structures of the units in $\bm s$, such as the value of attributes $\{X_i\}$, network degrees in a graph or size of the intersection of samples $|M|$. 

An example serves to make this setting and notation more concrete.  Consider the problem of estimating the number of injection drug users in a city \citep[e.g.][]{kaplan1993how,heimer2010estimation,salganik2011assessing}.  This is an important task in public health research and drug use epidemiology because injection drug use may contribute to transmission of infectious diseases such as hepatitis C virus (HCV) and human immunodeficiency virus (HIV). Policymakers considering educational and intervention programs to mitigate the harms of injection drug use require accurate estimates of the size of the target population.  In this context, $U$ is the set of injection drug users in the city, and we wish to estimate the size of this set, $\mu(U)=|U|=N$. The probability space is $(U,\mathcal{F},\P)$, where $\mathcal{F}$ is a $\sigma$-field consisting of subsets of $U$, and $\P$ is a probabilistic query distribution assigning probabilities to each set in $\mathcal F$.  For example, if $s\in\mathcal F$ is a subset of $U$, then $\P(s)$ represents a mechanism for randomly sampling a subset of $|s|$ members of $U$.  An individual injection drug user $i\in U$ may have an attribute $X_i$ representing, for example, the number of times $i$ has experienced an overdose and been taken to the local hospital.  In addition, relational information may be available in the form of a graph or network $G=(U,E)$, where $E$ is the set of pairs $\{i,j\}$ that are ``connected'' via syringe sharing or social relationships.

\subsection{Asymptotic regimes}

We now formalize asymptotic regimes relevant for hidden set size estimation. 

\begin{definition}[Asymptotic regime]
Let $(U_t, \mathcal F_t, \P_t)$ be a probability space defined for each $t=1,2,\ldots$, and let  $\bm s_t=\{s_1^{(t)},\ldots, s_{k_t}^{(t)}\}$ be the set of $k_t$ samples from $U$, with $|\bm s_t|=\sum_{i=1}^{k_t} |s_i|$.  An asymptotic regime is a sequence $\{\bm s_t, U_t, \P_t\}_{t=1}^\infty$ such that the limits $\lim_{t\rightarrow\infty} |\bm s_t|$ and $\lim_{t\rightarrow\infty} \mu(U_t) $ exist (infinity included). 
\end{definition}

We first define the trivial finite-population regime, in which the sampled set approaches the fixed population $U$.
\begin{definition}[Finite-population regime] 
Let $U$ be a hidden discrete set of fixed size. The finite-population (non-asymptotic) regime is $U_t=U$ for all $t$ and $\bm s_t=U$ for all $t>t_0$, where $t_0<\infty$ is a positive integer. 
\label{def:finite}
\end{definition}

Next, we define the ``infill'' asymptotic regime that arises when sampling repeatedly (with replacement between different samples) from a set of fixed finite size. This regime is an example of a superpopulation model \citep{isaki1982survey,brewer1979class} which reproduces the original population $U_t=U$ for each $t$.  

\begin{definition}[Infill asymptotic regime] 
Let $(U_t=U, \mathcal F_t= \mathcal F, \P_t)$ be a sequence of probability spaces, where $\P_t$ assigns probability $\P(s_i^{(t)} | s_1^{(t)},\ldots, s_{i-1}^{(t)})$ to sequential samples $s_1^{(t)}, \ldots, s_{k_t}^{(t)}\in\mathcal F$ for any $t$. The infill asymptotic regime is a sequence $\{\bm s_t, U_t=U, \P_t\}_{t=1}^\infty$, where $|s_j^{(t)}|$ (any $j\in [k_t]$) and $\mu(U_t)$ are both fixed and bounded, and the number of samples $k_t\rightarrow\infty$ as $t\rightarrow\infty$. 
\label{def:infill}
\end{definition}

Sometimes it can be difficult to conceptualize sampling infinitely many times from $U$, or the sampling design may be subject to practical constraints, so that sampling only a single or fixed number of samples, or a fixed proportion of the total population, is allowed. 
It is therefore also reasonable to study the performance of estimators under an asymptotic regime in which a \emph{single} sample is obtained from the hidden set, where the size of the sample and hidden set may tend to infinity together.  

\begin{definition}[Outfill asymptotic regime]
Let $(U_t,\mathcal F_t,\P_t)$ be a sequence of probability spaces, where $\P_t$ assigns probability $\P(s_i^{(t)}|s_1^{(t)},\ldots,s_{i-1}^{(t)})$ to $s_1^{(t)},\ldots,s_{k_t}^{(t)}\in\mathcal F_t$ for any $t$. The outfill asymptotic regime is a sequence $\{\bm s_t,U_t, \P_t\}$ such that $\mu(U_t)\rightarrow\infty$ and $n_{i}^{(t)}:=|s_i^{(t)}|\rightarrow\infty$ with $n_{i}^{(t)}/\mu(U_t)\rightarrow c_i\in [0,\infty)$ for each $i\in[k_t]$ as $t\rightarrow\infty$, where $\lim_{t\rightarrow\infty}k_t$ may be finite or infinite. 
\label{def:outfill1}
\end{definition}

\begin{figure} 
  \centering 
  \includegraphics[width=12cm]{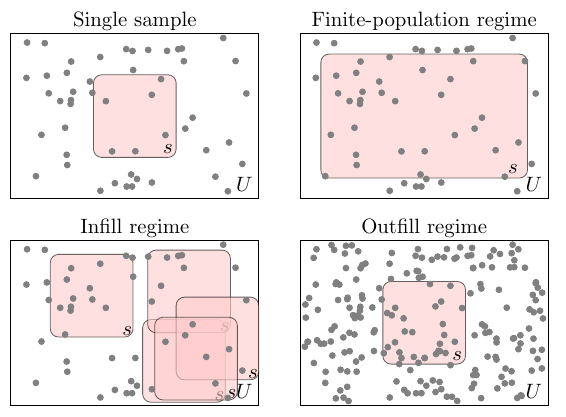}
  \caption{Illustration of different regimes for discrete sets. Units are indicated by circles. The sample $s$ ``expands'' to $U$ under the finite-population regime. Infinitely repeated samples of a fixed size are drawn from a fixed population under infill asymptotics. Under outfill, $s$ and $U$ grow simultaneously with $s$ approaching a fixed proportion of $U$.}
  \label{fig:asym}
\end{figure}

The ratio $c_i$ can be greater than one when sampling is with replacement. 
The sample sizes mentioned above can be deterministic or random. In the latter case, all regimes can be defined in a similar way, e.g. $\E |\bm s_t|/\mu(U_t)\rightarrow c_i$. 
We are primarily interested in the outfill asymptotic regime with $k_t=1$ for all $t$. The binomial model as well as the multiplier and capture-recapture methods, described below, are special cases where $k_t$ may be greater than one. Figure \ref{fig:asym} illustrates different regimes in general discrete settings.

\subsection{Statistical properties of estimators}

Let $\delta(\bm s_t)$ be an estimator of $\mu(U_t)$, defined for each $t$.  We are interested in the statistical properties of $\delta(\bm s_t)$ under the asymptotic regimes described above.  An estimator is called \textit{unbiased} if $\E_t[\delta(\bm s_t)]=\mu(U_t)$ for all $t$, where $\E_t(\cdot)$ denotes expectation with respect to $\P_t$.  Under an asymptotic regime $\{\bm s_t, U_t, \P_t\}_{t=1}^\infty$, an estimator $\delta(\bm s_t)$ is \textit{asymptotically unbiased} if $\lim_{t\rightarrow\infty}\E_t [\delta(\bm s_t)]-\mu(U_t)=0$.  There may be some slightly biased estimators whose variance is smaller than that of every unbiased estimator. A common way to balance the trade-off between the bias and variance is to evaluate the \textit{mean squared error} (MSE), defined as $MSE[\mu(U_t), \delta(\bm s_t)]=\E\left[(\delta(\bm s_t)-\mu(U_t))^2\right]=(\E[\delta(\bm s_t)]-\mu(U_t))^2+\v[\delta(\bm s_t)]$.  
The asymptotic MSE under a given regime is defined as $\lim_{t\rightarrow\infty} MSE(\mu(U_t),\delta(\bm s_t))$.

An estimator $\delta(\bm s_t)$ that satisfies  
$ \lim_{t\rightarrow\infty}\P_t(|\delta(\bm s_t) -\mu(U_t)|>\varepsilon)=0 $
for any $\varepsilon>0$ under a particular asymptotic regime $\{\bm s_t, U_t, \P_t\}$ is called
  \textit{consistent} for $\mu(U_t)$. 
An estimator $\delta(\bm s_t)$ is called \textit{MSE consistent} for $\mu(U_t)$ under a certain asymptotic regime if $MSE[\delta(\bm s_t), \mu(U_t)]\rightarrow 0$ as $t\rightarrow\infty$ under that asymptotic setting. MSE consistency implies consistency.
Under a particular asymptotic regime, we call a sequence of estimates $\delta(\bm s_t)$ \emph{asymptotically normal} with mean $\xi$, variance $\sigma^2/t^r$ and rate $t^r$ if the cumulative distribution function (CDF) of $t^r\l \delta(\bm s_t)-\xi \r$ converges to the CDF of a $N(0, \sigma^2)$ random variable, denoted by $t^r\l \delta(\bm s_t)-\xi \r \xrightarrow{L} N(0, \sigma^2)$.

\section{Ordered sets: the German tank problem}
\label{sec:ord}

Suppose each unit in the hidden set $i\in U$ has a distinct label $X_i\in \mathbbm R$, so that the labels give a natural ordering of the elements in $U$: we can define units $i<j$ if $X_i<X_j$. One common scenario for discrete $U$ is that the $X_i$'s are consecutive integers. Another common situation when $U$ is equivalent to an interval in $\mathbbm R$ is that $\cup _{i\in U}X_i$ equals that interval. An observation of samples from an ordered set $U$ consists of sampled units $s$ and their labels $\{x_i:\ i\in s\}$. 

In 1943, the Economic Warfare Division of the American Embassy in London initiated a project to learn about the capacity of the German military using serial numbers found on German equipment 
 \citep{ruggles1947empirical,GUM2005105}.
In a simple conceptualization of the problem, let $U=\{1,\ldots,N\}$ and consider sampling $n=|s|$ units without replacement from $U$ with probability $\P(s)=1/{N\choose n}$. 
With $k_t$ i.i.d. repeated samples, an estimator $\delta(\bm s)$ for $N$ is a functional of the observations, including the sample sizes and observed labels $X_{1,1}, \ldots, X_{1,n}, \ldots, X_{k_t,1},\ldots, X_{k_t, n}$. 
For example, to estimate the total number of participants in a marathon, if all $N$ participants are numbered by the consecutive integers $1,\ldots,N$, one could randomly record the first $n$ numbers they saw in the race, and estimate the total based on the observed numbers.

For the $k$th sample $X_{k,1}, \ldots, X_{k,n}$, we let $X_{k (n)}$ be the $n$th order statistic in the sample.  With one sample, the maximum likelihood estimator (MLE) for $N$ is $\wh N_{MLE}=X_{(n)}$, which is negatively biased. \citet{goodman1952serial} proposed an unbiased estimator
\begin{equation}
\wh N_G=\frac{n+1}{n}X_{(n)}-1,
\end{equation}
which is a uniformly minimum-variance unbiased estimator (UMVUE), with $\v(\wh N_G)=(N-n)(N+1)/n(n+2)$.  An alternative estimator of $N$ takes into account the gap between $X_{(n)}$ and $N$, and adjusts for the bias with the average gap between order statistics \citep{goodman1952serial}. The estimator
\begin{equation}
\wh N_2=X_{(n)}+\frac{X_{(n)}-X_{(1)}}{n-1}-1,
\end{equation}
is also unbiased, with $\v(\wh N_2)={n(N-n)(N+1)}/{(n-1)(n+1)(n+2)}$. The estimator $N_2$ can also be modified to estimate $N$ when the labels do not start with 1. In particular,
\begin{equation*}
\wh N_3=\frac{(n+1)\l X_{(n)}-X_{(1)} \r}{n-1}-1
\end{equation*}
is the UMVUE of $N$ when the initial label is unknown \citep{goodman1952serial}, with $\v (\wh N_3)=2(N-n)(N+1)/(n-1)(n+2)$.

\begin{figure} 
  \centering 
  \includegraphics[width=12cm]{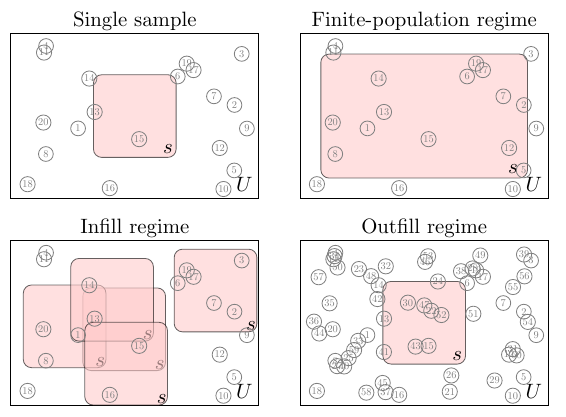}
  \caption{Illustration of a single sample, and the finite-population, infill, and outfill regimes for the German tank problem. Units with their labels are represented by circles with numbers inside.}
  \label{fig:german}
\end{figure}

When there is more than one sample, we take the MLE as the maximizer of the joint sampling probability $\P_t(s_1,\ldots, s_{k_t})$, which is $\max_{i\in[k_t]} X_{i(n)}$, the largest observed value across all $k_t$ samples. For estimators with closed forms like $\wh N_G, \wh N_2, \wh N_3$, we derive $k_t$ estimates $\delta(s_i^{(t)}), \ i=1, \ldots, k_t$ based on each sample, and take their average as the estimator. In remaining sections, we average the estimators under infill by default, except for the models where infinite without-replacement sampling is feasible (e.g. Section \ref{subsec:binom}). We consider the infill asymptotic regime where $n_t=n, N_t=N$ and $k_t\rightarrow\infty$, and the outfill regime where $n_t, N_t\rightarrow \infty, k_t=1$ with $n_t/N_t\rightarrow c\in (0,1)$. Figure \ref{fig:german} illustrates different regimes for the German tank problem. We have the following asymptotic results:

\begin{theorem}
Under the finite-population and infill regimes, $\wh N_{MLE}, \wh N_G, \wh N_2, \wh N_3$ are consistent.
  Under the outfill regime, all estimators above are asymptotically unbiased with asymptotic MSE $O(1)$ and inconsistent.
  Whether the initial label is known or not does not change the rate of MSE of the UMVUE.
  \label{thm_tank}
\end{theorem}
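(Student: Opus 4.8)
The plan is to reduce the whole statement to the exact sampling distribution of the sample extremes $X_{(1)},X_{(n)}$ under equiprobable sampling without replacement from $\{1,\dots,N\}$, and then handle the three regimes in turn. Under the finite-population regime, once $\bm s_t=U$ every order statistic is determined: $X_{(1)}=1$, $X_{(n)}=N$, $X_{(n)}-X_{(1)}=N-1$, so each of $\wh N_{MLE},\wh N_G,\wh N_2,\wh N_3$ equals $N$ identically for $t>t_0$ and consistency is immediate. Under infill, the averaged estimators $\bar\delta_t=k_t^{-1}\sum_{i=1}^{k_t}\delta(s_i^{(t)})$ for $\delta\in\{\wh N_G,\wh N_2,\wh N_3\}$ are means of i.i.d.\ copies of an unbiased estimator with the finite variance recorded in the text, so the law of large numbers gives $\bar\delta_t\to N$ while $\v(\bar\delta_t)=\v(\delta)/k_t\to0$; hence MSE-consistency, and in particular consistency. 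For the MLE, $\wh N_{MLE}=\max_{i\le k_t}X_{i(n)}\le N$, and a single sample attains $X_{(n)}=N$ with probability $\binom{N-1}{n-1}/\binom{N}{n}=n/N>0$, so $\P(\wh N_{MLE}\neq N)=(1-n/N)^{k_t}\to0$.

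For the outfill regime I would first establish the one identity that drives everything: for integers $m_1,m_2\ge0$,
\[
\P(X_{(1)}-1\ge m_1,\ N_t-X_{(n)}\ge m_2)=\frac{\binom{N_t-m_1-m_2}{n_t}}{\binom{N_t}{n_t}}=\prod_{j=0}^{m_1+m_2-1}\frac{N_t-n_t-j}{N_t-j}.
\]
With $n_t/N_t\to c$ this converges to $(1-c)^{m_1+m_2}$, so the bottom gap $G_1^{(t)}:=X_{(1)}-1$ and the top gap $G_2^{(t)}:=N_t-X_{(n)}$ are \emph{asymptotically independent}, each converging in law to a geometric variable $G$ with $\P(G\ge m)=(1-c)^m$; the same product is dominated by $(1-n_t/N_t)^{m_1+m_2}$ uniformly in large $t$, which yields convergence of all moments. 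Given this, $\wh N_G,\wh N_2,\wh N_3$ are unbiased for every $(n_t,N_t)$ and hence asymptotically unbiased, while $\E[\wh N_{MLE}]-N_t=(n_t-N_t)/(n_t+1)$ is bounded; substituting $n_t/N_t\to c$ into the displayed closed-form variances gives $\v(\wh N_G),\v(\wh N_2)\to(1-c)/c^2$, $\v(\wh N_3)\to2(1-c)/c^2$, and $\v(X_{(n)})=\tfrac{n_t^2}{(n_t+1)^2}\v(\wh N_G)\to(1-c)/c^2$, so combining with the bias each asymptotic MSE is a positive constant, in particular $O(1)$.

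To rule out consistency I would exhibit a non-degenerate distributional limit for $\delta(\bm s_t)-N_t$. For the MLE this is $-(N_t-X_{(n)})\xrightarrow{L}-G$. Writing $X_{(n)}-X_{(1)}=N_t-1-G_1^{(t)}-G_2^{(t)}$ with $G_i^{(t)}=O_{\P}(1)$ gives $(X_{(n)}-X_{(1)})/(n_t-1)\xrightarrow{\P}1/c$, so Slutsky's theorem yields $\wh N_G-N_t,\ \wh N_2-N_t\xrightarrow{L}\tfrac{1-c}{c}-G$ and $\wh N_3-N_t\xrightarrow{L}\tfrac{2(1-c)}{c}-(G_1+G_2)$ with $G_1,G_2$ i.i.d.\ geometric; all limits are non-degenerate, so $\P(|\delta(\bm s_t)-N_t|>\varepsilon)\not\to0$ for small $\varepsilon$ and none of the four estimators is consistent. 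For the last assertion, note the UMVUE is $\wh N_G$ when the first label is known and $\wh N_3$ when it is not, with $\v(\wh N_3)/\v(\wh N_G)=2n_t/(n_t-1)\to2$; hence the two MSEs are of identical order -- $\Theta(1)$ under outfill and $\Theta(1/k_t)$ under infill after averaging -- differing only by the constant factor $2$.

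The finite-population and infill cases, and the limit-taking in the variance formulas, are routine. The one step requiring genuine care is the outfill gap computation -- specifically the asymptotic independence of the two extreme gaps, which is what makes $\wh N_2$ and $\wh N_3$ genuinely inconsistent rather than merely high-variance; once the displayed joint identity and its uniform geometric domination are in hand, the remaining arguments are just Slutsky's theorem and bookkeeping.
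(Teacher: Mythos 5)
Your proof is correct, and for the outfill regime it takes a genuinely different route from the paper's. The paper handles finite-population and infill exactly as you do (plug in $n=N$; unbiasedness plus averaging; $\P(\max_i X_{i(n)}=N)\to 1$), but for outfill inconsistency it argues combinatorially: exploiting the fact that $\wh N_G$ and $\wh N_3$ live on a lattice, it bounds $\P(\wh N_G=N)$ above by $1-\P(X_{(n)}=k_0)$ for a suitable $k_0$ near $N$, shows via a product/Stirling-type computation that this converges to $1-\exp(-c/(1-c))<1$, and treats $\wh N_{MLE}$ and $\wh N_2$ only by the remark that they are shifted and scaled versions of $\wh N_G$. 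You instead derive the exact joint survival function of the two extreme gaps $G_1=X_{(1)}-1$ and $G_2=N-X_{(n)}$, namely $\binom{N-m_1-m_2}{n}/\binom{N}{n}\to(1-c)^{m_1+m_2}$, conclude that the gaps are asymptotically independent geometrics with uniform geometric domination, and then read off non-degenerate limit laws $\wh N_G-N\xrightarrow{L}\tfrac{1-c}{c}-G$, $\wh N_3-N\xrightarrow{L}\tfrac{2(1-c)}{c}-(G_1+G_2)$, etc., via Slutsky. Your approach buys strictly more: explicit limiting distributions, a clean uniform treatment of all four estimators (including $\wh N_2$ and $\wh N_3$, for which the asymptotic independence of the two gaps is exactly the point the paper leaves implicit), and moment convergence for free from the domination; the paper's argument is more elementary but yields only the single inequality needed for inconsistency. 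One small observation your computation surfaces: since $\E[\wh N_{MLE}]-N=(n-N)/(n+1)\to-(1-c)/c\neq 0$, the MLE is not asymptotically unbiased under the paper's own definition (only the MSE claim $O(1)$ survives for it); you correctly state only that its bias is bounded, which is the defensible version of that clause.
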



\section{Bernoulli Trials}
\label{sec:binom}

Consider a discrete hidden set $U$ consisting of $N$ unlabeled, indistinguishable units. A sample ${s}$ from $U$ arises by associating a binary indicator $Y_i \sim \text{Bernoulli}(p)$ to each $i\in U$, for fixed $0<p<1$, where different realizations of the $Y_i$'s can be generated in different draws. The probability $p$ may be known or unknown. A single sample consists of the subset of units with positive indicators, $s=\{i\in U:\ Y_i=1\}$. 
This is a frequently encountered situation in computer science, ecology, business, epidemiology, and many other fields \citep{friedman1999multicast, talluri2009finite, brookmeyer1988method, karon2008estimating}.

\subsection{Binomial $N$ parameter}
\label{subsec:binom}

We first assume that $p$ is known.  A single sample ${s}$ from $U$ gives a statistic $Q:=n=|s| = \sum_{i\in U} Y_i$ which has Binomial$(N,p)$ distribution. 
When there are $k$ independent samples, we assume they are generated by the same mechanism, so $\P(Q_1=q_1, \ldots, Q_k=q_k)=\prod_{i=1}^k{N\choose q_i}p^{q_i}(1-p)^{N-q_i}$. 
The method of moments estimator (MME) $\wh N_{MME}=\bar Q/p$ is an unbiased estimator of $N$. There are two versions of the MLE, derived from continuous and discrete likelihood equations respectively. The continuous MLE, $\wh N_{MLE}'$ is the solution of $\partial L/\partial N=0$ (take $Q_{(k)}$ if it is larger than the solution), and the discrete MLE $\wh N_{MLE}$ is the largest $N$ such that $L(N)-L(N-1)\ge 0$. 

\begin{figure} 
  \centering 
  \includegraphics[width=12cm]{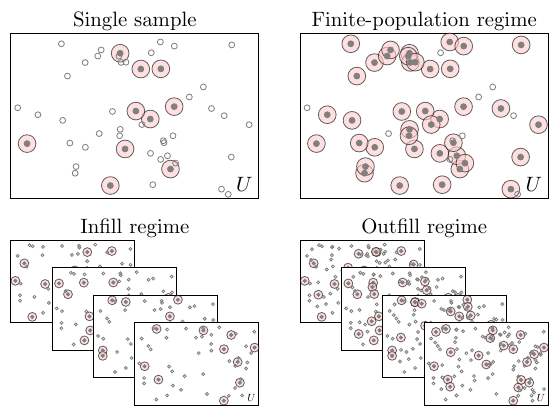}
  \caption{Illustration of the sampling mechanism for the binomial model, and the finite-population, infill and outfill asymptotic regimes. The solid points with red circles are units with indicator 1 (which are therefore in the sample), and the rest are unobserved.}
  \label{fig:bern}
\end{figure}

The finite-population regime arises when $k=1$ and $p\rightarrow 1$, i.e. when all units are associated with indicator $1$ and observed in a single sample. We consider the infill asymptotic regime with $N_t=N$ and $k_t\rightarrow\infty$.
The outfill regime is $k_t, N_t\rightarrow\infty$ with $k_t/N_t\rightarrow c>0$. Figure \ref{fig:bern} shows how the sampling mechanism varies under different regimes for the binomial $N$ model. 
The following theorem combines results in \citep{Blumenthal1980, Feldman1968} and states the consistency of estimates under the infill asymptotic regime, along with error rates under the outfill regime. In particular, the estimation error of $Q_{(k)}$ increases with $N$ under the outfill regime.

\begin{theorem}
Under the finite-population regime, $\wh N_{MME}$, $Q_{(k)}$ and $\wh N_{MLE}$ are consistent. 
Under infill asymptotics, $\wh N_{MLE}, \wh N_{MME}$, $Q_{(k)}$, and $\wh N'_{MLE}$ after rounding to the nearest integer, are consistent 
\citep{Blumenthal1980}. 
Under outfill asymptotics, $\wh N_{MME}$ and $\wh N'_{MLE}$ are both asymptotically unbiased and normal with variance $O(1)$. The ``relative error'' of the discrete MLE, ${(\wh N_{MLE}-N)}/{N^\alpha}\xrightarrow{P} 0$ for any $\alpha>1/2$. The ``relative error'' of $Q_{(k)}$ with $\alpha=1$ goes to $p-1$ in probability.
\label{thm:binom}
\end{theorem}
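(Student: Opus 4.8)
The plan is to handle $\wh N_{MME}$ and $X_{(n)}$ by elementary limit theorems (laws of large numbers, Chernoff bounds, the de Moivre--Laplace CLT) and to handle the two likelihood-based estimators by a Z-estimation/delta-method argument resting on asymptotic expansions of the digamma function $\psi = (\log\Gamma)'$. Throughout, write $X_i=\sum_{j=1}^{N}Y_{ij}$ with the $Y_{ij}$ i.i.d.\ $\text{Bernoulli}(p)$, so that under the $n$-independent-samples model all $nN$ indicators are i.i.d.\ and $\sum_{i=1}^n X_i\sim\text{Binomial}(nN,p)$; this identity trivializes several of the computations below.

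\textbf{Finite-population and infill regimes.} Under the finite-population regime ($n=1$, $p\to1$) we have $\P(X=N)=p^N\to1$, so with probability tending to one the single sample equals $U$, $X=N$, and all of $\wh N_{MME}=X/p$, $X_{(n)}=X$, and $\wh N_{MLE}$ return $N$ exactly; consistency is immediate. Under infill ($N_t=N$ fixed, $n\to\infty$), the SLLN gives $\bar X\to Np$ a.s., hence $\wh N_{MME}\to N$, and in fact $\v(\wh N_{MME})=N(1-p)/(np)\to0$, so $\wh N_{MME}$ is MSE consistent; $\P(X_{(n)}\neq N)=(1-p^N)^n\to0$, so $X_{(n)}\xrightarrow{P}N$ (indeed $X_{(n)}=N$ eventually a.s.\ by Borel--Cantelli). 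For $\wh N_{MLE}$ and $\wh N'_{MLE}$ I would invoke \citet{Blumenthal1980}, after noting the mechanism: $\tfrac1n\log\bigl(L(M)/L(M-1)\bigr)=\tfrac1n\sum_i\log\tfrac{M}{M-X_i}+\log(1-p)$ converges a.s.\ to a deterministic limit that is $+\infty$ (hence positive) for $M\le N$ and strictly negative for $M>N$ (this last inequality checked via Jensen/monotonicity as in Blumenthal), pinning the maximizer of the discrete likelihood at $N$ for $n$ large, and similarly for the rounded continuous MLE.

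\textbf{Outfill: $\wh N_{MME}$ and $X_{(n)}$.} Since $\sum_i X_i\sim\text{Binomial}(nN,p)$ with $nN\to\infty$, $\wh N_{MME}=\bar X/p$ is exactly unbiased with $\v(\wh N_{MME})=N(1-p)/(np)\to(1-p)/(cp)=O(1)$, and de Moivre--Laplace gives $\bigl(\wh N_{MME}-N\bigr)/\sqrt{N(1-p)/(np)}\xrightarrow{L}N(0,1)$, i.e.\ $\wh N_{MME}-N_t\xrightarrow{L}N\!\bigl(0,(1-p)/(cp)\bigr)$ with rate $t^0$; the non-vanishing variance gives inconsistency. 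For $X_{(n)}$, a union bound plus a Chernoff bound gives $\P\bigl(X_{(n)}>Np(1+\varepsilon)\bigr)\le n\,e^{-\kappa(\varepsilon)N}\to0$ because $n\asymp N$, while $X_{(n)}\ge X_1$ and $X_1/N\xrightarrow{P}p$; hence $X_{(n)}/N\xrightarrow{P}p$, i.e.\ the relative error $(X_{(n)}-N)/N\xrightarrow{P}p-1$.

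\textbf{Outfill: $\wh N'_{MLE}$ and the discrete MLE.} The continuous MLE solves $g_t(N):=\sum_{i=1}^{n}\bigl[\psi(N+1)-\psi(N-X_i+1)+\log(1-p)\bigr]=0$. I would (i) establish relative consistency $\wh N'_{MLE}/N_t\xrightarrow{P}1$ from monotonicity of $N\mapsto g_t(N)$ and a uniform LLN on the rescaled score, which also shows the clause ``take $X_{(n)}$ if it exceeds the solution'' is inactive w.h.p., since $X_{(n)}/N\xrightarrow{P}p<1$ from the previous step; (ii) linearize, $\wh N'_{MLE}-N_t=-g_t(N_t)/g_t'(\tilde N)$ for $\tilde N$ between the two, and show via the delta method that $\E[g_t'(N_t)]\to-cp/(1-p)$ and $\v(g_t'(N_t))\to0$, so the denominator concentrates at a nonzero constant; (iii) treat the numerator $g_t(N_t)$ as a sum of $n$ i.i.d.\ terms with $\v\!\bigl(\psi(N_t-X_1+1)\bigr)\approx p/\bigl(N_t(1-p)\bigr)$, so $\v(g_t(N_t))\to cp/(1-p)=O(1)$, verify a Lyapunov condition, and conclude $g_t(N_t)\xrightarrow{L}N\!\bigl(0,cp/(1-p)\bigr)$; (iv) combine to get $\wh N'_{MLE}-N_t\xrightarrow{L}N\!\bigl(0,(1-p)/(cp)\bigr)$, the same limit as $\wh N_{MME}$. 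Asymptotic unbiasedness requires the sharper expansion $\psi(z)=\log z-\tfrac1{2z}+O(z^{-2})$: the leading $O(1/N_t)$ contributions to $\E[\psi(N_t+1)-\psi(N_t-X_1+1)+\log(1-p)]$ cancel, leaving $\E[g_t(N_t)]=n\cdot O(N_t^{-2})=O(N_t^{-1})\to0$. For the discrete MLE, one shows it differs from $\wh N'_{MLE}$ by $O_P(1)$ (the discrete log-likelihood increments change sign within a bounded window of the continuous root, since the curvature $g_t'(N_t)$ is $\Theta(1)$ and the increment fluctuations are $O_P(1)$), whence $\wh N_{MLE}-N_t=O_P(1)$ and $(\wh N_{MLE}-N_t)/N_t^{\alpha}\xrightarrow{P}0$ for every $\alpha>1/2$; the precise discrete bookkeeping is the content of \citet{Blumenthal1980, Feldman1968}. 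The main obstacle is step (iii)--(iv): pushing the Z-estimation argument through a genuine triangular array (both $n$ and $N$ growing), with the digamma asymptotics controlled uniformly in a neighborhood of $N_t$ of width $O_P(1)$, and verifying that the linearization remainder is $o_P(1)$ there.
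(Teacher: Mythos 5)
The paper does not actually prove Theorem \ref{thm:binom}: there is no corresponding entry in the appendix, and every claim in the statement is attributed directly to \citet{Blumenthal1980} and \citet{Feldman1968}. Your proposal therefore does strictly more than the paper. The parts you prove from scratch are correct and well chosen: the finite-population and infill claims for $\wh N_{MME}$ and $X_{(n)}$ via $\P(X=N)=p^N\to 1$, the SLLN, and $(1-p^N)^n\to 0$; the outfill normality of $\wh N_{MME}$ via $\sum_i X_i\sim\text{Binomial}(nN,p)$ and de Moivre--Laplace; the limit $(X_{(n)}-N)/N\xrightarrow{P}p-1$ via a Chernoff--union bound against $X_{(n)}\ge X_1$; and the digamma/Z-estimation analysis of $\wh N'_{MLE}$, including the key observation that the two $O(1/N_t)$ terms in $\E[g_t(N_t)]$ cancel (this cancellation is exactly what makes the asymptotic-unbiasedness claim nontrivial, and you identify it correctly). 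For the genuinely delicate discrete bookkeeping (consistency of the discrete MLE under infill, and its $\alpha>1/2$ rate under outfill) you defer to the same two references the paper cites, so at worst you are on equal footing with the paper there. What your route buys is a self-contained, checkable argument for most of the theorem and an explicit identification of where the cited results are actually needed; what the paper's route buys is brevity.

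Two small cautions. First, in the infill MLE sketch, the claim that $\E\log\frac{M}{M-X}+\log(1-p)<0$ for $M>N$ does not follow from Jensen in the direction you invoke it: Jensen gives $\E\log(1-X/M)\le\log(1-Np/M)$, which is an upper bound when you need a lower bound exceeding $\log(1-p)$. The inequality is true, but it requires the argument in \citet{Feldman1968} (or a telescoping identity for $\E\log\frac{M}{M-X}$), not a one-line convexity appeal. Second, your window argument actually yields $\wh N_{MLE}-N=O_P(1)$, which is stronger than the stated $(\wh N_{MLE}-N)/N^{\alpha}\xrightarrow{P}0$ for $\alpha>1/2$; that is fine for proving the theorem as stated, but you should either justify the $O_P(1)$ localization carefully (uniform control of the linearization remainder over a neighborhood of random width, in a triangular array) or fall back on the weaker cited rate, as you partly acknowledge.
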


When $p$ is unknown, the situation does not improve: negative or unstable estimates may occur, and Bayesian approaches are usually adopted to avoid these issues. 
\citet{Blumenthal1980} adopted a conjugate prior Beta$(a,b)$ for $p$ and an improper uniform prior $p(N)\propto 1$ for $N$; the posterior is proper if and only if $a>1$ \citep{kahn1987cautionary}. 
\citet{Blumenthal1980} showed that 
the posterior mode $\wh N_m$ is consistent under infill asymptotics, and satisfies
\begin{equation*}
\frac{\sqrt n}{N}\l \wh N_m-N\r \xrightarrow{L} N\l 0, \frac{2(1-p)^2}{p^2} \r
\end{equation*}
under the outfill regime. In particular, the MSE rate is slower compared to $O(1)$ as in Theorem \ref{thm:binom} when $p$ is known.

A special case of the Binomial scenario arises for zero-truncated counts.  For example, a registry may record the number of times each unit has been observed, but zero counts are not recorded.  Distributional assumptions can be used to estimate the proportion of unobserved zero counts, leading to estimates of the set size.  Zero-truncated counting models have been used to estimate size of hard-to-reach populations, including drug users \citep{cruyff2008point, bohning2004estimating}, undocumented immigrants \citep{van2003point, bohning2009covariate}, criminal population \citep{van2003estimating, bouchard2007capture}, the number of infected households in an epidemic \citep{scollnik1997inference}, and species richness in ecology \citep{wilson1992capture, craig1953utilization}.  To illustrate, associate to each unit $i\in U$ a realization of the attribute $Y_i\sim\text{Poisson}(\lambda)$. 
A sample from $U$ is $s=\{i\in U: \ Y_i>0\}$ and an observation on $s$ is $\{Y_i:\ i\in s\}$, the set of all positive counts. 
For one sample, the sampling mechanism is given by 
$\P(y_1, \ldots, y_{|s|}|s)=\prod_{i\in s} \lambda^{y_i}/(e^{\lambda}-1)y_i!$. 
Estimating $\lambda$ under this model reveals the proportion of zero counts, $p=1-e^{-\lambda}$, and estimation of $N$ proceeds as in the Binomial$(N,p)$ case outlined above. 
The asymptotic results in Theorem \ref{thm:binom} follow. 

\subsection{Waiting times}

Sometimes the state of a hidden unit may change, thereby making it known to an observer. 
For example, terrorist plots may change state from ``hidden'' to ``executed'', making them observable by intelligence agents \citep{kaplan2010terror}. The temporal pattern of such state changes may give insight into the number of hidden units. Properties of waiting times to an event have been exploited to estimate the number of units in studies of terrorism, crime, and estimation of epidemiological risk population sizes \citep{kaplan2010terror,frey2010queue,crawford2016graphical,crawford2017hidden}.

Suppose $U$ is a set of $N$ hidden units in existence at time 0, each of which is at risk of ``failure'' at some future time.  To each $i\in U$, associate a failure time $T_i\sim\text{Exponential}(\lambda)$, and suppose failure times are observed up to some finite observation time $T>0$.  A sample is the set of units that have failed by the end of study, $s=\{i\in U:\ T_i<T\}$ with $|s|=n$, and an observation on $s$ is $\{T_i: i\in s\}$. With repeated sampling, a new observation is independent of all previous observations, taken after all units are set to be ``at risk'' over again. We consider the finite-population regime in which $T\rightarrow \infty$ so that all failures are observed, the infill regime in which $T$ and $N$ are fixed with the number of repeated observations $k_t\rightarrow\infty$, and the outfill regime in which $T_t, N_t\rightarrow\infty$ with $T_t/N_t\rightarrow c>0$. 
For example, if $U$ is the set of hidden terrorist plots \citep[e.g.][]{kaplan2010terror,kaplan2012estimating}, the finite-population regime keeps $|U|=N$ constant, while letting the maximum observation time $T\to\infty$, so that eventually every plot in $U$ is executed and thereby revealed to the observer. The infill regime consists of keeping $N$ and $T$ constant, while obtaining (hypothetical) repeated realizations of the same $N$ plots over $[0,T]$.  The outfill regime lets both the observation time $T$ and number of plots $N$ go to infinity together, so that more plots are added, while the observation time increases.  Figure \ref{fig:waiting} illustrates each regime under the waiting time model. 


\begin{figure}[!h] 
  \centering 
  \includegraphics[width=14cm]{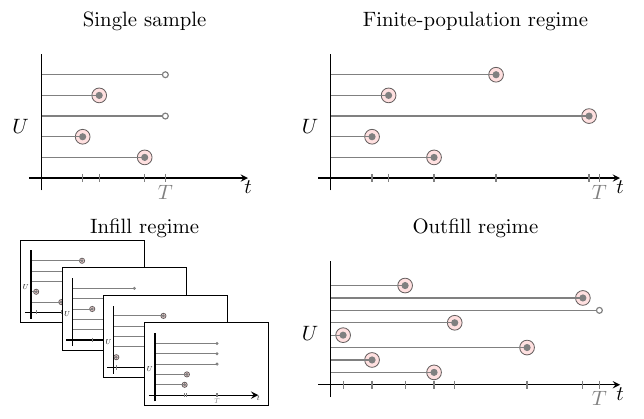}
  \caption{Illustration of the waiting time model. The observed event times are subject to right censoring at $t=T$, that is, events that occur before $T$ are observed. Solid dots with red shades indicate observed event times. The finite-population regime is that $T\rightarrow\infty$ so that all events are observed. Infill asymptotics amounts to generating different realizations of the failure times. Under the outfill regime, $T$ and the total number of units $N$ both increase toward infinity.}
  \label{fig:waiting}
\end{figure} 

 Let $\Delta_i:=T_i-T_{i-1}$ be the waiting time between the $(i-1)$th and $i$th failure. The sampling mechanism is given by 
\[ \P(t_1, \ldots, t_n | N,\lambda)=\lambda^n\cdot\prod_{i=1}^n (N-i+1)\cdot\exp\left[-\lambda\sum_{i=1}^n(N-i+1)\Delta_i\right]\cdot \exp[-\lambda(N-n)(T-t_n)], \]
which gives rise to the likelihood $L(t_1, \ldots, t_n; N)$. Alternatively, if we ignore the timing of events, the observed number of events can be characterized by a binomial model $\P(n | N, \lambda)={N\choose n}(1-e^{-\lambda T})^n e^{-\lambda T(N-n)}$, which yields $L_2(n; N)$. Maximizing $L$ and $L_2$ lead to two estimates, $\wh N_{MLE}$ and $\wh N'_{MLE}$ of $N$. It is easy to verify that $\partial \log L/\partial N=\partial \log L_2/\partial N$, so $\wh N_{MLE}$ and $\wh N'_{MLE}$ are identical. The timing of events does not contain more information about $N$ than the total number of events.  The asymptotic behavior of $\wh N_{MLE}$ follows from the discussion in Section \ref{subsec:binom}: when $\lambda$ is known, $\wh N_{MLE}$ is consistent under finite-population and infill regimes. Under the outfill regime, it is unbiased and asymptotically normal with variance $O(1)$.



\subsection{The network scale-up method}

Estimating the number of vertices in a hidden network or graph is an important problem in sociology, epidemiology, computer science, and intelligence applications \citep{killworth1998estimation, bernard2001estimating, zheng2006many, mccormick2010many, feehan2016generalizing,killworth1998social, katzir2011estimating}. A subgraph of a larger graph may contain information about the size of the larger graph \citep{feehan2016generalizing, bernard1991estimating, Massoulie2006peer}. The network scale-up method (NSUM) \citep{killworth1998estimation} provides an estimate for the size of a hidden population by making use of network information from a sub-sample of individuals. 

Consider a graph $G_V=(V,E)$, where $V$ is a set of $M$ units and $\{i,j\}\in E$ means that $i,j\in V$ are connected. $V$ is called the \textit{total population}, and a subset $U\subseteq V$ of size $N$ is the \textit{hidden population}. Assume $G_V$ is \emph{simple}, and has no parallel edges or self-loops.  The network of $U$ is $G_U=(U, E_U)$, where $E_U=\{\{i,j\}: i\in U, j\in U, \{i,j\}\in E\}$. We call $V\setminus U$ the \textit{general population}. A sample from a subset of $V$, along with network degrees of the sampled units within and outside of that subset provides information for learning about the size of $U$.  Suppose the total population network $G_V$ is generated from the \er random graph model \citep{erdos1959random} in which each pair of distinct vertices is connected independently by an edge with probability $\Pr(\{i,j\}\in E) = \pi$.  The likelihood of a random graph $G_V=(V,E)$ from the \er model with $|V|=N$ and connection probability $\pi$ is 
\[ \Pr(G_V) = \pi^{|E|} (1-\pi)^{\binom{N}{2}-|E|}  \]
where $|E|$ is the number of edges and $\binom{N}{2}$ is the number of unordered distinct pairs of vertices.  Two common sampling scenarios -- sampling from $V\setminus U$ and directly from $U$ -- are illustrated in Figure \ref{fig:nsum}.

\begin{figure} 
  \centering 
  \includegraphics[width=14cm]{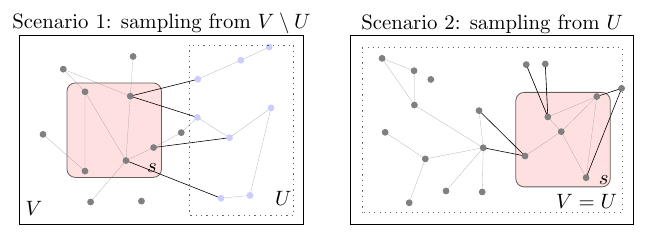}
  \caption{Illustration of the two common scenarios for the network scale-up method. In scenario 1, $U\subset V$ and we observe a randomly chosen subset $s$ of $V\setminus U$ and number of edges from each unit in $s$ to $U$ (thick lines) and to $V\setminus U$ (thin lines) respectively.  In scenario 2, $U=V$ and we observe the induced subgraph (edges represented by thin lines) from a randomly chosen subset $s$ of $U$ as well as the pendant edges (thick lines) between $s$ and $U\setminus s$.}
  \label{fig:nsum}
\end{figure} 


\subsubsection{Sampling from the general population}

We consider sampling uniformly at random from the general population $V\setminus U$ with a fixed sample size $|s|=n$. The sampling mechanism is $\P(s \mid |s|=n)=\binom{M-N}{n}^{-1}$. 
 For a sample $s\in V$, we observe network degrees $d_i^V:=\sum_{j\in V} \indicator{E_{ij}=1}$ and $d_i^{U}:=\sum_{j\in U} \indicator{E_{ij}=1}$ for each $i\in s$.
 As an empirical example, suppose we wish to estimate the number of people who died in an earthquake \citep[e.g.][]{bernard1991estimating}.  We cannot survey the dead (members of $U$) but we can survey living people ($V\setminus U$) to determine how many people they know ($d_i^V$), and how many they know who died as a result of the earthquake ($d_i^U$). 
 
Under the \er model, $\E d_i^V = (M-1)\pi\approx M\pi$ and $\E d_i^U = N\pi, \ \forall i\in V\setminus U$. Taking the ratio and canceling out $\pi$ yields the MME
\begin{equation}
\wh N_{NS}=M\cdot\frac{\sum_{i=1}^n d_i^U}{\sum_{i=1}^n d_i^V}.
\label{nsum2}
\end{equation}
Conditional on $d_i^V$, $d_i^U$ follows hypergeometric distribution for each $i$. The same estimator can also be derived under a different model assumption. \citet{killworth1998estimation} considered a model where $d_i^U$ is Binomial$(d_i^V, N/M)$ given $d_i^V$, and (\ref{nsum2}) is then the MLE under this binomial model, which is unbiased with variance $(M-N)N/\sum_i d_i^V$.

We consider the finite-population regime in which $n\rightarrow (M-N)$, i.e. $s\rightarrow V\setminus U$. Under the infill regime, $M, N, n$ are fixed and the number $k_t$ of repeated samples $s \subseteq V\setminus U$ goes to infinity. The outfill regime is that $M_t, N_t, n_t\rightarrow\infty$ such that $N_t/M_t\rightarrow c_1\in (0,1)$, $n_t/(M_t-N_t)\rightarrow c_2\in (0,1)$, and $k_t=1$.

Sometimes an intermediate step in deriving $\wh N_{NS}$ is the estimation of personal network sizes $d_i^V$. If unbiased estimates $\hat {d_i^V}$ are plugged in, $\wh N_{NS}$ would have a positive bias by Jensen's inequality since $1/x$ is a convex function. Let us assume for now that the $d_i^V$'s are observed true values. Theorem \ref{nsum_asym1} states the asymptotic properties of $\wh N_{NS}$ under the \er assumption. 

\begin{theorem}
$\wh N_{NS}$ has a positive bias $N/(M-1)$. It is not necessarily consistent under the finite-population regime, and converges to a positively biased quantity under infill. It is asymptotically normal with bias $c_1$ and variance $O(1)$ under the outfill regime.
\label{nsum_asym1}
\end{theorem}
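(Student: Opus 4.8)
The plan is to derive the exact finite-sample bias once and then treat the three regimes separately. \textbf{The bias.} Condition on the observed personal network sizes $(d_i^V)_{i\in s}$. For each $i\in V\setminus U$ the hidden set $U$ consists of a fixed $N$ of the $M-1$ vertices other than $i$, and under the exchangeable attribute structure the $d_i^V$ neighbours of $i$ are, given $d_i^V$, uniformly distributed among those $M-1$ vertices; hence $d_i^U\mid d_i^V$ is Hypergeometric with $\E[d_i^U\mid d_i^V]=d_i^V\,N/(M-1)$. Therefore, on the event $\sum_{i\in s}d_i^V>0$,
\[
\E\!\left[\wh N_{NS}\,\Big|\,(d_i^V)_{i\in s}\right]
 = M\cdot\frac{\sum_{i\in s}\E[d_i^U\mid d_i^V]}{\sum_{i\in s} d_i^V}
 = \frac{MN}{M-1}=N+\frac{N}{M-1},
\]
and the tower property gives $\E[\wh N_{NS}]=N+N/(M-1)$, the claimed positive bias. (Under the Killworth specification $d_i^U\mid d_i^V\sim\text{Binomial}(d_i^V,N/M)$ the same step gives unbiasedness, so the bias is an artefact of the exchangeable-graph model.)

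\textbf{Finite-population and infill regimes.} Even when $s=V\setminus U$ we observe only the counts $d_i^U$, never the membership of $U$, so $\wh N_{NS}=M\sum_i d_i^U/\sum_i d_i^V$ remains a genuinely random ratio whenever $0<\pi<1$ and $0<N<M-N$; in particular $\P(\wh N_{NS}=N)<1$ (a three-vertex instance already realizes several distinct values), so it is not consistent in this regime. Under infill, the paper's convention is to average the per-replicate estimators $\wh N_{NS}^{(j)}$; the weak law of large numbers applied to the i.i.d.\ sequence $\wh N_{NS}^{(1)},\wh N_{NS}^{(2)},\dots$ (discarding the negligible fraction of replicates with zero denominator) gives $k_t^{-1}\sum_{j\le k_t}\wh N_{NS}^{(j)}\xrightarrow{P}\E[\wh N_{NS}]=N+N/(M-1)>N$, which is inconsistent with a positively biased limit. (If instead $G_V$ is held fixed and only $s$ is re-drawn, the limit is the graph-measurable quantity $M\,\E_s[\sum_i d_i^U/\sum_i d_i^V]\neq N$, whose $G_V$-average is again $N+N/(M-1)$.)

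\textbf{Outfill regime.} Now $k_t=1$, $M_t,N_t,n_t\to\infty$ with $N_t/M_t\to c_1\in(0,1)$ and $n_t/(M_t-N_t)\to c_2\in(0,1)$, so $n_t\asymp M_t$. Write $\wh N_{NS}=M_t\,\bar d^U/\bar d^V$ with $\bar d^U=n_t^{-1}\sum_{i\in s}d_i^U$, $\bar d^V=n_t^{-1}\sum_{i\in s}d_i^V$, whose means are $\mu_U=N_t\pi$ and $\mu_V=(M_t-1)\pi$. First establish a law of large numbers, $\bar d^U/\mu_U\xrightarrow{P}1$ and $\bar d^V/\mu_V\xrightarrow{P}1$, which already gives $\wh N_{NS}/N_t\xrightarrow{P}1$ and fixes the centring at $M_t\mu_U/\mu_V=M_tN_t/(M_t-1)$. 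Then prove a joint central limit theorem for $(\bar d^U-\mu_U,\ \bar d^V-\mu_V)$: using $\v(d_i^V)=\Theta(M_t)$, $\v(d_i^U)=\Theta(N_t)$ and the finite-population correction of without-replacement sampling, each coordinate has variance $\Theta(M_t)/n_t=\Theta(1)$, so the pair is asymptotically bivariate normal with $\Theta(1)$ covariance. A one-term Taylor expansion of the ratio, tracking the $M_t$-factors as above, yields
\[
\wh N_{NS}-\frac{M_tN_t}{M_t-1}
 = \frac{1}{\pi}\bigl(\bar d^U-\mu_U\bigr)-\frac{c_1}{\pi}\bigl(\bar d^V-\mu_V\bigr)+o_p(1),
\]
a linear combination of the jointly normal, $\Theta(1)$-variance terms whose limiting variance is bounded away from $0$ (since $d_i^U-c_1 d_i^V$ retains $\Theta(M_t)$ conditional variance given $d_i^V$); hence it is asymptotically $N(0,\sigma^2)$ with $\sigma^2=O(1)$. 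Because $M_tN_t/(M_t-1)-N_t=N_t/(M_t-1)\to c_1$, Slutsky gives $\wh N_{NS}-N_t\xrightarrow{L}N(c_1,\sigma^2)$ — asymptotic normality with bias $c_1$ and variance $O(1)$, from which MSE $O(1)$ and inconsistency follow.

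\textbf{Main obstacle.} The delicate step is the joint CLT together with the bookkeeping that keeps the variance at $O(1)$ rather than growing with $M_t$. The summands $d_i^V,d_i^U$ ($i\in s$) are dependent — adjacent sampled vertices share the edge $E_{ii'}$, and under a general exchangeable graph model they also share vertex attributes — while $s$ itself is drawn without replacement from $V\setminus U$. The plan is to condition on the attributes $(Y_j)_{j\in V}$, which makes the edge indicators conditionally independent so that $\sum_{i\in s}d_i^U$ is, up to the lower-order correction for edges internal to $s$, a sum of independent bounded indicators eligible for a Lyapunov/Lindeberg triangular-array CLT, and then to remove the conditioning and the without-replacement constraint via a finite-population (H\'ajek) CLT or the asymptotic equivalence of with- and without-replacement sampling when $c_2<1$. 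Checking that the within-$s$ edges and the attribute-induced cross-covariances are of strictly lower order than the leading $\Theta(M_t)$ term — precisely where the i.i.d./exchangeable attribute hypothesis (and, in the cleanest Erd\H{o}s--R\'{e}nyi case, the constancy of $\omega$) is used — is the part most likely to require care.
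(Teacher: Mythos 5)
Your proposal matches the paper's proof in all essentials: the hypergeometric conditional law of $d_i^U$ given $d_i^V$ yields the exact bias $N/(M-1)$, a counterexample (units of $U$ invisible from $V\setminus U$) handles the finite-population claim, averaging i.i.d.\ replicates gives the positively biased infill limit, and a CLT-plus-delta-method linearization of the ratio gives the $N(c_1,\sigma^2)$ limit with $\sigma^2=O(1)$ under outfill, with the bias $c_1$ emerging from the centering shift $M_tN_t/(M_t-1)-N_t\to c_1$ exactly as in the paper. The only organizational difference is in the outfill bookkeeping: the paper splits $\sum_{i} d_i^V$ into the exactly independent binomial blocks $\sum_{i} d_i^s$ and $\sum_{i} d_i^{V\setminus s}$ and then only brackets the limiting variance via a Cauchy--Schwarz bound on the numerator--denominator covariance, whereas you propose a joint CLT for $\bigl(\bar d^U,\bar d^V\bigr)$ with the dependence handled by conditioning on attributes and a finite-population CLT --- a defensible variant that would deliver the exact limiting variance rather than an interval for it.
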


\subsubsection{Sampling from the hidden population}

When possible, a random sample from the hidden population $U$ can also lead to a valid estimate.  Consider a random sample $s\subseteq U$ where $G_U$ follows the \er model with edge probability $\pi$. We observe the nodes $i\in s$, as well as network degrees $d_i^s:=\sum_{j\in s} \indicator{E_{ij}=1}$ and $d_i^U:=\sum_{j\in U}\indicator{E_{ij}=1}$, for each individual $i\in s$. 
Then, $\E\l \sum_{i\in s} d_i^U\r=2{n\choose 2}\pi$ and $\E\l \sum_{i\in s} d_i^s \r=\pi n(N-1)$.
Canceling out $\pi$ yields the MME, 
which is often simplified to
\begin{equation}
\wh N=\frac{n\sum_{i=1}^n d_i^U}{\sum_{i=1}^nd_i^s}.
\label{nsum}
\end{equation}

\citet{chen2016estimating} investigated the behavior of $\wh N$ with finite-sample as well as with large $n$, but did not specify the relationship between $N$ and $n$ under the asymptotic setting. In our setting, the finite-population regime is $n\rightarrow N$ with $N$ fixed. The infill regime is that $n, N$ are fixed and the sampling procedure is infinitely repeated. The outfill asymptotic regime is that $n_t, N_t\rightarrow\infty$ with $n_t/N_t\rightarrow c\in (0,1)$. Then we have the following theorem for the asymptotic properties of $\wh N$. 

\begin{theorem}
Under the finite-population regime, $\wh N$ converges to $N$. Under infill asymptotics, $\wh N$ is always positively biased conditioning on $|E_s|>0$ \citep{chen2016estimating}, and is hence inconsistent.  Under outfill asymptotics, $\wh N$ is asymptotically normal with bias $(1-c)/c$ and variance $O(1)$.
\label{nsum_asym2}
\end{theorem}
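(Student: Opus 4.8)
The plan is to rewrite $\wh N$ as a simple function of two edge counts and then treat the three regimes separately. Let $E_s$ be the set of edges of $G_U$ with both endpoints in $s$, and $E_{s,U\setminus s}$ the set with exactly one endpoint in $s$. Counting each edge from its endpoints gives $\sum_{i\in s}d_i^s = 2|E_s|$ and $\sum_{i\in s}d_i^U = 2|E_s| + |E_{s,U\setminus s}|$, so that
\begin{equation*}
\wh N = n + \frac{n\,|E_{s,U\setminus s}|}{2|E_s|},\qquad\text{equivalently}\qquad \wh N - N = \frac{n\,|E_{s,U\setminus s}| - 2(N-n)\,|E_s|}{2|E_s|}.
\end{equation*}
On $\{|E_s|=0\}$ the estimator is undefined, and, as in \citet{chen2016estimating}, we condition on its complement throughout.

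The finite-population regime is immediate: for $t>t_0$ we have $s=U$, hence $E_{s,U\setminus s}=\emptyset$ and $\wh N = n = N$ exactly (on $\{|E_U|>0\}$). For the infill regime the reported estimator is the average $\bar\delta_t = k_t^{-1}\sum_{j=1}^{k_t}\wh N^{(j)}$ of $k_t\to\infty$ i.i.d.\ replicates, each conditioned on containing at least one within-sample edge. The summands are i.i.d.\ with finite mean, so the strong law gives $\bar\delta_t \to \E[\wh N \mid |E_s|>0]$ almost surely; \citet{chen2016estimating} show this limit strictly exceeds $N$, the positive bias following from a conditional Jensen argument applied to $|E_{s,U\setminus s}|/|E_s|$. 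Hence $\bar\delta_t$ converges to a value different from $N$, so it is inconsistent and $\P(|\bar\delta_t-N|<\varepsilon)\to 0$ for every $\varepsilon$ below the limiting bias.

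The substantive part is the outfill regime, $n_t/N_t\to c\in(0,1)$ with one sample. Put $A_t=|E_{s,U\setminus s}|$ and $B_t=|E_s|$, and condition on the vertex attributes $Y=(Y_i)_{i\in U}$ and the sample $s$; given $(Y,s)$, the variables $A_t$ and $B_t$ are independent and are sums of $n_t(N_t-n_t)$ and $\binom{n_t}{2}$ independent Bernoulli$(\omega(Y_i,Y_j))$ terms respectively. I would then argue in three steps. First, $2B_t/n_t^2 \xrightarrow{P} \pi$: a conditional law of large numbers reduces $B_t$ to $\sum_{\{i,j\}\subseteq s}\omega(Y_i,Y_j)$, and a $U$-statistic law of large numbers reduces this (divided by $\binom{n_t}{2}$) to $\pi$. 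Second, a central limit theorem for the numerator, $\big(n_tA_t-2(N_t-n_t)B_t-(N_t-n_t)n_t\pi\big)/n_t^2 \xrightarrow{L} N(0,\tau^2)$ for a constant $\tau^2$: this comes from a Lyapunov CLT applied to the $O(N_t^2)$ conditionally independent bounded summands (each of size $O(N_t)$, total variance $O(N_t^4)$, Lyapunov ratio $O(N_t^{-1})$), combined with a CLT for the attribute-level fluctuations of $\E[A_t\mid Y,s]$ and $\E[B_t\mid Y,s]$. Third, divide the second display by $2B_t = n_t^2\pi(1+o_P(1))$ and apply Slutsky, using $(N_t-n_t)n_t\pi/(n_t^2\pi) = N_t/n_t-1\to (1-c)/c$; this yields $\wh N - N \xrightarrow{L} N\big((1-c)/c,\ \tau^2/\pi^2\big)$, and $\tau^2/\pi^2 = O(1)$. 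In the Erd\H{o}s--R\'enyi case $\omega\equiv\pi$ the attribute term vanishes and $\tau^2 = \lim n_t^{-4}\,\v\big(n_tA_t-2(N_t-n_t)B_t\big)$ is an explicit constant in $c$ and $\pi$.

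The step I expect to be the main obstacle is the second one: reconciling the two scales of randomness in the numerator. The conditional Bernoulli noise in $A_t$ already contributes fluctuations of order exactly $N_t^2$ --- precisely the order that makes $\wh N-N$ have a nondegenerate $O(1)$ limit --- whereas the attribute-level fluctuations enter through $U$-statistics whose H\'ajek projections must be shown not to dominate. This holds automatically under the Erd\H{o}s--R\'enyi model that underlies the NSUM literature; for a general EGM it requires that the first-order projection $y\mapsto\E[\omega(y,Y_2)]$ be essentially non-random, which is exactly the condition under which the limiting variance is genuinely $O(1)$.
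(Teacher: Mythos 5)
Your proof follows the same basic route as the paper's: write $\wh N = n + n\sum_i d_i^{U\setminus s}/\sum_i d_i^s$, apply central limit theorems to the two edge counts, and extract the bias $(1-c)/c$ from the $n$ versus $n-1$ mismatch in the denominator's mean via Slutsky and the delta method. Two differences are worth noting. First, the paper splits $\wh N - N$ into the sum of two terms, derives a separate limiting normal law for each, and then ``combines'' them; because the two terms are built from the dependent quantities $\sum d_i^{U\setminus s}/\sum d_i^s$ and $\sum d_i^s$, the paper can only bound the limiting variance $\tau^2$ between two values. Your single Lyapunov CLT for the linear combination $n|E_{s,U\setminus s}|-2(N-n)|E_s|$, exploiting the conditional independence of the two edge counts given $(Y,s)$, is cleaner and would yield the exact limiting variance rather than an interval; this is a genuine improvement in the outfill step. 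Second, your closing caveat about the H\'ajek projection is not a weakness of your argument but a gap in the paper's: the paper's displays treat $\sum d_i^s/2$ and $\sum d_i^{U\setminus s}$ as exact binomials, which holds under Erd\H{o}s--R\'enyi but not under a general EGM, where edges sharing a vertex are positively correlated through the attributes. If $\mathrm{Var}\l\E[\omega(Y_1,Y_2)\mid Y_1]\r>0$, the fluctuations of $\sum d_i^s$ are of order $n^{3/2}$ rather than $n$, the term $(N-n)\l 1-\pi/(\sum d_i^s/n^2)\r$ is of order $\sqrt N$ rather than $O_P(1)$, and the claimed $O(1)$ variance fails. You have correctly identified the condition (degenerate first-order projection) under which the theorem as stated actually holds for EGMs beyond the Erd\H{o}s--R\'enyi case. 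Your finite-population and infill arguments match what the paper asserts without proof.
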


\subsection{Estimating a total with unequal sampling probabilities}

A generalization of binomial models allows for heterogeneity in the inclusion, or ``success'' probabilities $p$, that is, when the sampling is not uniformly at random. \citet{horvitz1952generalization} proposed unbiased estimators for population means and totals under the setting of sampling without replacement from finite population, where the selection probabilities can be unequal.  
 The Horvitz-Thompson (HT) estimator for the population total is $\wh N =  \sum_{i\in s}{1}/{p_i}$, 
where $p_i = \E(\indicator{i\in s})$ is the probability that unit $i\in U$ is sampled in $s$.  The estimator $\wh N$ is unbiased for the total population size $N$. 
This estimator and its variants have been applied to the estimation of animal abundance \citep{borchers1998horvitz} and other fields.
We consider a deterministic sample size $n$. Then the variance of $\wh N$ is \citep{horvitz1952generalization}
\begin{equation}
\v(\wh N)=-\frac{1}{2}\sum_{i=1}^N\sum_{j=1}^N(p_{ij}-p_i p_j)\l \frac{1}{p_i}-\frac{1}{p_j} \r^2,
\label{varHT2}
\end{equation}
where $p_{ij}$ is the joint probability that units $i$ and $j$ are both in the sampled set $s$, and $p_{ii}=p_i$. The finite-population regime amounts to letting $p_i\rightarrow 1$ for any $i$. Under the infill regime, $p_i, p_{ij}, N$ are fixed and the number of repeated samples $k_t\rightarrow\infty$. Under the outfill regime, $N$ and $n$ both increase to infinity such that $n/N\rightarrow c\in (0,1)$. Figure \ref{fig:ht} shows the non-uniform sampling mechanism under each regime.

\begin{figure} 
  \centering 
  \includegraphics[width=12cm]{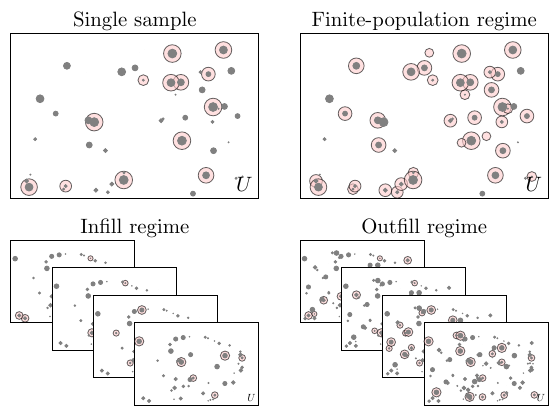}
  \caption{Illustration of the single sample, finite-population, infill and outfill regimes for the general HT estimator. The probability of being sampled for each point here is visualized as its size. }
  \label{fig:ht}
\end{figure}

Specifically, we consider the following setting to illustrate the asymptotic behavior of the HT estimator. Suppose $U$ consists of $H$ clusters, where the $h$th cluster has $N_h$ units.  We assume that $H$ is known in advance, while $N_h$ is observed only if a unit from cluster $h$ is sampled. In each sample, a total of $n$ units are sampled from $U$ by the following procedure: first a cluster $h$ is drawn uniformly at random each with probability $1/H$. Then one unit is drawn from the $N_h$ units in that cluster, also uniformly at random, without replacement. We assume that $\min_{h \in [H]}N_h>n$. An observation on sample $s$ consists of the units in $s$, their cluster membership, and the sizes of clusters that they belong to. 


When there are repeated observations, we assume they follow the same design and are mutually independent. 
In this setting, the outfill regime is defined such that each cluster in the original population is replicated and appears $t$ times in $U_t$. 
The cluster sizes are fixed at $N_h^{(t)}= N_h$ and the number of clusters increases as $H_t=tH$.  $N=\sum_{h=1}^H$ is fixed and the estimand is $N_t=Nt$.
The sample size satisfies $n_t/N_t\rightarrow c\in (0,1)$. We then have the following theorem about the consistency of $\wh N$ under each regime. In particular, the variance of $\wh N$ grows with $N$ under the outfill regime.

\begin{theorem}
$\wh N$ is consistent under the finite-population regime, and MSE consistent under infill asymptotics. 
$\wh N$ is unbiased and asymptotically normal with variance $O(N)$ under the outfill regime.
\label{thm:ht}
\end{theorem}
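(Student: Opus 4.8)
I would treat the three regimes separately, and everything rests on first rewriting the estimator in the clustered design. Since $p_{i(h)}=n/(HN_h)$, the Horvitz--Thompson estimator collapses to $\wh N=\sum_{i\in s}1/p_{i(h(i))}=(H/n)\sum_{i\in s}N_{h(i)}$, where $h(i)$ is the cluster of unit $i$. Letting $W_\ell$ be the size of the cluster selected at the $\ell$th of the $n$ draws and noting that each draw independently picks a cluster uniformly, $W_1,\dots,W_n$ are i.i.d.\ and $\wh N=(H/n)\sum_{\ell=1}^n W_\ell$ is a fixed multiple of a sum of i.i.d.\ bounded random variables. This representation drives all three regimes.

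\emph{Finite-population and infill.} For the finite-population regime $p_i\to1$ for all $i$, so $\P(s\neq U)\le\sum_i(1-p_i)\to0$, and on $\{s=U\}$ the estimator equals $\sum_{i\in U}1/p_i$, which converges deterministically to $|U|=N$; hence $\wh N\xrightarrow{P}N$. For the infill regime the design is repeated independently $k_t\to\infty$ times and, by the paper's averaging convention, the reported estimator is the mean $\bar{\wh N}$ of the $k_t$ i.i.d.\ single-sample HT estimators; each is unbiased with the fixed finite variance \eqref{varHT2} (a short multinomial-covariance calculation gives $\v(\wh N)=(H\sum_h N_h^2-N^2)/n$), so $\E\bar{\wh N}=N$ and $\v(\bar{\wh N})=\v(\wh N)/k_t\to0$, which is MSE consistency.

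\emph{Outfill.} This is the substantive case. Under the replication construction $U_t$ has $H_t=tH$ clusters whose sizes are $t$ copies each of $N_1,\dots,N_H$, with $N_t=tN$ and $n_t\sim ctN$. In the representation above, $W_\ell$ is now uniform over the multiset of cluster sizes of $U_t$, i.e.\ uniform on $\{N_1,\dots,N_H\}$ with each value having probability $1/H$ --- a distribution that does \emph{not} depend on $t$ --- with mean $N/H$ and variance $\sigma_W^2:=H^{-1}\sum_h N_h^2-(N/H)^2$. Hence $\wh N_t=(H_t/n_t)\sum_{\ell=1}^{n_t}W_\ell$. Unbiasedness is immediate, $\E\wh N_t=(tH/n_t)\,n_t(N/H)=tN=N_t$, consistent with the general HT property; and $\v(\wh N_t)=(tH/n_t)^2 n_t\sigma_W^2=t^2H^2\sigma_W^2/n_t\sim (H^2\sigma_W^2/cN)\,t=\Theta(N_t)$, i.e.\ variance $O(N)$. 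The Lindeberg--L\'evy CLT applied to the i.i.d.\ sequence $(W_\ell)$ gives $n_t^{-1/2}\sum_\ell(W_\ell-N/H)\xrightarrow{L}N(0,\sigma_W^2)$, and multiplying through by the deterministic constant $tH/n_t$ yields $\sqrt{n_t}\,(tH)^{-1}(\wh N_t-N_t)\xrightarrow{L}N(0,\sigma_W^2)$; rewriting the scaling via $tH=HN_t/N$ and $n_t\sim cN_t$, this is exactly asymptotic normality of $\wh N_t$ at rate $N_t^{-1/2}$, the variance of $\wh N_t$ being of order $N_t=O(N)$. No Slutsky step is needed, since the left-hand side is literally the standardized i.i.d.\ sum.

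\textbf{Main obstacle.} The one genuinely delicate point is the ``without replacement within a cluster'' clause: under outfill $n_t\to\infty$ while each $N_h$ stays fixed, so a cluster could in principle be selected more than $N_h$ times and the identity $\wh N_t=(H_t/n_t)\sum_\ell W_\ell$ could fail. I would handle this by observing that the number of draws into any fixed cluster has mean $n_t/H_t\to cN/H=O(1)$, so (for $c$ small enough, and in general up to an event whose probability is controlled by a binomial tail bound) no cluster is over-drawn and the representation holds exactly; a short truncation argument then shows the aggregate contribution of over-drawn clusters is $o_P(\sqrt{\v(\wh N_t)})$, hence affects neither the variance order nor the limit law. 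Granting this, the theorem reduces to the i.i.d.-summand representation together with routine bookkeeping of the $t$-scalings, and no deeper technique is required.
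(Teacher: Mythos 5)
Your proof follows essentially the same route as the paper's. The paper writes $\wh N^{(t)}=\sum_h X_h^{(t)}/p_h^{(t)}$ with $\bm X^{(t)}$ multinomial, collapses the $H_t=tH$ replicated clusters into $H$ categories, and applies the multivariate CLT plus the delta method; your representation $\wh N_t=(H_t/n_t)\sum_\ell W_\ell$ with i.i.d.\ categorical summands is the same object viewed one draw at a time, your variance $(H\sum_h N_h^2-N^2)/n$ agrees exactly with the paper's $\sum_{h<l}(N_h-N_l)^2/n$, and your limit law matches theirs. The univariate Lindeberg--L\'evy formulation is, if anything, cleaner, and your infill and finite-population arguments coincide with (or slightly elaborate on) what the paper does.

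The one substantive divergence is your ``main obstacle'' paragraph, and there the proposed resolution does not work as stated. The number of draws into a fixed cluster is Binomial$(n_t,1/H_t)$ and converges in distribution to Poisson$(cN/H)$, which exceeds any fixed $N_h$ with strictly positive probability; since there are $tH$ clusters, the probability that \emph{no} cluster is over-drawn tends to zero, and the expected total excess $\sum_h (X_h-N_h)^+$ grows like $\Theta(t)=\Theta(n_t)$, which is not $o_P\bigl(\sqrt{\v(\wh N_t)}\bigr)=o_P(\sqrt t)$. So neither ``$c$ small enough'' nor a tail bound on a single cluster rescues the exact identity, and a genuine truncation argument would have to control a perturbation of the same order as the centering itself. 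To be fair, this is a gap you have inherited from, rather than introduced into, the paper: its standing assumption $\min_h N_h>n$ is incompatible with the outfill regime ($n_t\to\infty$ with $N_h$ fixed), and its proof silently treats the multinomial representation and the inclusion probabilities $p_{i(h)}=n/(HN_h)$ as exact. The clean repair is to modify the design so the representation holds by construction (with-replacement selection within clusters, or cluster sizes growing with $t$), after which both your argument and the paper's go through verbatim.
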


\section{Other unordered sets}
\label{sec:unordered}

\subsection{Capture-recapture experiments}

Capture-recapture (CRC) refers to a broad class of methods to estimate the size of hidden populations 
for which random sampling is possible \citep{seber1973estimation, fienberg1972multiple, darroch1958multiple, pollock1982capture, chao1987cap, jolly1965explicit}. Estimation of the population size is based on the overlap between two or more random samples \citep{hook1995capture, robles1988application, van2015estimating, paz2011how}. While a wide variety of CRC estimators have been developed \citep{schwarz1996general, young1998capture, pollock1982capture, chao1987cap, khan2017one}, we focus here on the two- and $k$-sample CRC estimators with homogeneity within a closed population.

\subsubsection{Two-sample estimation}
\label{subsubsec:2_samp_crc}

We first consider the common case of two-sample CRC. Let $U$ be a hidden finite set of size $N$, where each unit $i\in U$ has binary attributes $(X^1_i, X^2_i)$, which are all $(0,0)$ in the beginning. We draw a sample $s_1\subseteq U$ with size $n_1$ from $U$, and set $X^1_i=1$ for all $i\in s_1$. Then a second sample $s_2$ with size $n_2$ is drawn, independent from $s_1$ and uniformly at random, and we set $X^2_i=1$ for all $i\in s_2$. We observe $(X^1_i, X^2_i)_{i\in s_1\cup s_2}$, and let $m=\sum_{i\in U}\indicator{(X^1_i, X^2_i)=(1,1)}$.
In ecology, to estimate the abundance of an animal species, researchers could first capture $n_1$ animals from that species, mark them and then release them. After the captured animals have mixed well with the remaining ones, researchers could capture $n_2$ animals again, uniformly at random, and record the number $m$ of animals captured in the first step.
Then $m$ follows a hypergeometric distribution conditioning on $N, n_1$ and $n_2$, i.e. the mechanism of generating the observations can be defined as $\P(m|s_1, s_2)={n_1\choose m}{N-n_1\choose n_2-m}/{N\choose n_2}$. The MME, $\wh N_L={n_1n_2}/{m}$, is also known as the Lincoln-Petersen estimator \citep{lincoln1930calculating, petersen1894biology}. 

We consider the finite-population regime with $n_2\rightarrow N$. The infill regime is that $N, n_1, n_2$ are fixed and repeated sample pairs $\{s_1^{(t)}, s_2^{(t)}\}$ are drawn with $t\rightarrow\infty$. The outfill regime is given by $N^{(t)}, n_1^{(t)}, n_2^{(t)}\rightarrow\infty$ with $n_i^{(t)}/N^{(t)}\rightarrow c_i\in (0,1)$ for $i=1,2$.

Previous results exist on the bounds or estimates of biases and variances. These were implicitly based on asymptotic approximations: \citet{chapman1951some} showed a lower bound for the bias
\begin{equation*}
\E\l\wh N_L\r-N \ge N\left[ \frac{N}{n_1n_2}+2\l\frac{N}{n_1n_2}\r^2 \right]
\end{equation*}
under outfill, 
and bounded the variance as
\begin{equation*}
{\v}(\wh N_L)> N^2 \left[\l\frac{N}{n_1n_2}\r+\l\frac{N}{n_1n_2}\r^2\right]
\end{equation*}
under asymptotic approximation that was satisfied by the outfill regime. Though these no longer hold under finite-sample setting, calculations in \citep{chapman1951some} showed that $\wh N_L$ has a considerable bias under a range of settings. A less biased estimator
\begin{equation}
\wh N_C=\frac{(n_1+1)(n_2+1)}{m+1}-1,
\label{chap_est}
\end{equation} 
was proposed \citep{chapman1951some}, with bias

\begin{equation}
\E(\wh N_C)-N=-\frac{(N-n_1)!(N-n_2)!}{N!(N-n_1-n_2-1)!}
\label{chap_bias}
\end{equation}

for any $n_1, n_2, N$, and variance
\begin{equation}
\v(\wh N_C)\sim N^2\left[\frac{N}{n_1n_2}+2\l\frac{N}{n_1n_2}\r^2+6\l\frac{N}{n_1n_2}\r^3\right]
\label{chap_var}
\end{equation}
under outfill \citep{chapman1951some}, where $\sim$ means the difference between two quantities decay to 0. We have the following asymptotic result of $\wh N_L$ and $\wh N_C$. 
Specifically, both estimators have infinitely increasing estimation error under the outfill asymptotic setting.

\begin{theorem}
Under the finite-population regime, $\wh N_L$ and $\wh N_C$ are consistent. 
Under infill asymptotics, $\wh N_L$ is positively biased and has MSE $O(1)$ for at least a range of values of $n_1, n_2, N$. $\wh N_C$ is negatively biased, but the bias is within 1 if $n_1+n_2+1<N/2$ and $n_1n_2/N>\log N$ \citep{chapman1951some}. 
Under the outfill regime, $\wh N_L$ has bias at least $O(1)$ and variance at least $O(N)$. $\wh N_C$ is asymptotically unbiased with variance $O(N)$. 
Furthermore, $\wh N_C$ and $\wh N_L$ are inconsistent with $\P(|\wh N_{C}-N|<\varepsilon)\rightarrow 0$ and $\P(|\wh N_{L}-N|<\varepsilon)\rightarrow 0$ for some $\varepsilon>0$ when $n_1=c_1N, n_2=c_2N$.
\label{chapman}
\end{theorem}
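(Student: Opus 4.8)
The plan is to organize the proof by the three regimes, reusing the hypergeometric structure of $m$ throughout. Under all regimes $m \sim \text{Hypergeometric}(N, n_1, n_2)$, with $\E m = n_1 n_2 / N$ and $\v(m) = n_1 n_2 (N-n_1)(N-n_2)/(N^2(N-1))$; the whole argument is a matter of propagating this through the maps $m \mapsto n_1 n_2 / m$ and $m \mapsto (n_1+1)(n_2+1)/(m+1) - 1$. For the finite-population regime ($n_2 \to N$), once $n_2 = N$ every unit of the second sample covers $U$, so $s_1 \subseteq s_2$ deterministically, $m = n_1$, and both $\wh N_L = n_2$ and $\wh N_C$ collapse to $N$ exactly; hence both are consistent (in fact eventually exact). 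This is the easy case and can be dispatched in a sentence or two.

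For the infill regime ($N, n_1, n_2$ fixed, averaging over $k_t \to \infty$ i.i.d. sample pairs), the averaged estimator converges a.s.\ by the strong law to $\E[\wh N_L]$ and $\E[\wh N_C]$ respectively. For $\wh N_C$ the bias is given exactly by \eqref{chap_bias}, so the limit is $N$ plus a fixed nonzero offset (negative, and bounded by $1$ in absolute value under the stated condition $n_1+n_2+1<N/2$, $n_1 n_2/N > \log N$ — this inequality bound is quoted from \citet{chapman1951some} and need not be rederived), giving inconsistency with a quantifiable error. For $\wh N_L$ I would note $\E[\wh N_L] = n_1 n_2\, \E[1/m] > n_1 n_2 / \E m = N$ by Jensen (strict since $m$ is nondegenerate for a range of $n_1, n_2, N$), so the infill limit exceeds $N$; the MSE of the average is then $(\E[\wh N_L]-N)^2 + \v(\wh N_L)/k_t \to (\E[\wh N_L]-N)^2 = O(1)$. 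One must take care that $\wh N_L$ is well-defined — $m\ge 1$ — which holds whenever $n_1 + n_2 > N$, or can be enforced by conditioning; alternatively restrict to the "range of values" phrasing already in the statement.

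The outfill regime ($N_t, n_i^{(t)} \to \infty$, $n_i^{(t)}/N_t \to c_i \in (0,1)$) is where the real work lies, and where I expect the main obstacle. Here $\E m = n_1 n_2 / N \to \infty$ and $m$ concentrates: by a CLT for the hypergeometric, $(m - \E m)/\sqrt{\v(m)} \xrightarrow{L} N(0,1)$, with $\v(m) \sim N c_1 c_2 (1-c_1)(1-c_2)$, i.e.\ $\v(m) = O(N)$. The delta method applied to $g(m) = (n_1+1)(n_2+1)/(m+1)-1$ about $\E m$, using $g'(\E m) = -(n_1 n_2)/(\E m)^2 \approx -N^2/(n_1 n_2) = O(1/N)$, gives $\v(\wh N_C) \approx (g'(\E m))^2 \v(m) = O(1/N^2)\cdot O(N) = O(1/N)$ — wait, that contradicts \eqref{chap_var}. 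The resolution, and the delicate point, is that $\wh N_C$ scales like $N$, so one should instead track $\wh N_C / N$: the relative fluctuation is $O(1/\sqrt N)$, hence the \emph{absolute} MSE is $N^2 \cdot O(1/N) = O(N)$, matching \eqref{chap_var}. Concretely I would write $\wh N_C = N \cdot h(m/\E m)$ for a smooth $h$ with $h(1)=1$ (up to lower-order corrections from the $+1$'s), expand $h$ to second order, and read off: the $O(N^{-1})$ term in \eqref{chap_var} is the leading variance contribution $N^2 \cdot \v(m/\E m) = N^2 \cdot \v(m)/(\E m)^2$, and the bias from \eqref{chap_bias}, being the ratio of factorials, is $O(1)$ (it behaves like $-(1-c_1)^{?}\cdots$; asymptotic unbiasedness follows since it is bounded). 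Asymptotic normality of $\wh N_C$ at rate $N^{-1/2}$ then follows from the hypergeometric CLT plus the delta method. For $\wh N_L = n_1 n_2/m$ the same expansion gives bias $\E[\wh N_L] - N = n_1 n_2 (\E[1/m] - 1/\E m)$; a second-order Taylor estimate of $\E[1/m]$ gives $n_1 n_2 \cdot \v(m)/(\E m)^3 + \cdots = N \cdot \v(m)/(\E m)^2 + \cdots = O(1)$ (the leading Chapman bound $N^2/(n_1n_2) = 1/(c_1 c_2 N) \cdot N = O(1)$), and the variance is $O(N)$ by the delta method as above. Finally, for the last clause — inconsistency of $\wh N_C$ with $\P(|\wh N_C - N| < \varepsilon) \to 0$ for some $\varepsilon > 0$ when $n_i = c_i N$ — since $\wh N_C$ takes values on a lattice and $\v(\wh N_C) = \Theta(N) \to \infty$ while $\wh N_C$ is centered within $O(1)$ of $N$, the probability mass in any fixed window $(N - \varepsilon, N + \varepsilon)$ is at most $O(1/\sqrt N) \to 0$ by the local behavior implied by the CLT (the distribution spreads over a range of width $\Theta(\sqrt N)$). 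The main obstacle is bookkeeping the two competing scales — absolute error versus relative error — consistently across $\wh N_L$ and $\wh N_C$ so that the quoted rates $O(1)$ bias, $O(N)$ variance drop out cleanly; once the reparametrization $\wh N_\bullet = N \cdot h(m/\E m)$ is in place the rest is the standard hypergeometric CLT and delta method.
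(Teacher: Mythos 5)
Your proposal reaches every conclusion of the theorem and is essentially sound, but it takes a genuinely different route to the final (and hardest) claim. The paper handles the finite-population, infill, and outfill bias/variance statements just as you do: collapse at $n_2=N$, Jensen plus the exact bias formula for infill, and Chapman's quoted bounds for outfill. The real content is the inconsistency when $n_i=c_iN$, and there the paper argues combinatorially rather than probabilistically: it chooses $\varepsilon$ so small that the event $|\wh N_L-N|<\varepsilon$ forces $m$ into a window of width less than one around $c_1c_2N$, hence onto at most the single integer $c_1c_2N$; it then bounds $\P(m=c_1c_2N)$ via Stirling's formula by a quantity $\phi(N)$ with $\frac{\d}{\d N}\log\phi(N)=-\frac{1}{2N}$, so that $\log\phi(N)\le\log\phi(1)-\sum_{j=2}^N\frac{1}{2j}\rightarrow-\infty$. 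This is a self-contained, hand-made local-limit estimate of order $N^{-1/2}$ for the maximum hypergeometric point probability. You instead invoke the sampling-without-replacement CLT and the delta method. Your route is shorter and yields more (a full limiting distribution, and $\P(|\wh N_C-N|<\varepsilon)\rightarrow 0$ for \emph{every} $\varepsilon>0$), at the cost of importing the hypergeometric CLT; the paper's route needs only Stirling and elementary calculus.

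Two soft spots should be repaired. First, ``the probability mass in any fixed window is at most $O(1/\sqrt N)$ by the local behavior implied by the CLT'' conflates two different tools: a CLT controls neither point masses nor the rate for a fixed window. Either run the shrinking-window argument --- $\P(|\wh N_C-N|<\varepsilon)=\P\l |Z_N|<\varepsilon/\sqrt{V_N}\r$ with $\varepsilon/\sqrt{V_N}\rightarrow 0$ and $Z_N$ converging to a continuous law, which gives convergence to $0$ but no rate --- or prove a bound on $\max_k\P(m=k)$, which is exactly what the paper's Stirling computation supplies. Second, ``asymptotic unbiasedness follows since it is bounded'' is a non sequitur: a bounded bias need not vanish. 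You need the factorial ratio in \eqref{chap_bias} to actually tend to zero; it does, because it equals $(N-n_1-n_2)$ times $\P(m=0)=\binom{N-n_1}{n_2}/\binom{N}{n_2}$, which decays geometrically when $n_i/N\rightarrow c_i\in(0,1)$. With those two points tightened, your argument is complete.
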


Further, \citet{chapman1951some} showed that no estimator can be unbiased for all possible values of $N, n_1$ and $n_2$.

A similar but slightly different sampling mechanism gives rise to the multiplier method, also called the method of benchmark multiplier (MBM). 
In practice, researchers may know the number of hidden units with a certain trait. The overall prevalence of that trait in the hidden population, if available from estimation, would provide an estimate for the size of the hidden population. Often the prevalence is estimated through expert opinion, historical data, or from a separate sample \citep{godfrey2002economic, frischer2001comparison, hickman2006estimating}.

We consider the last approach. The idea of MBM can be expressed with a sampling mechanism similar to CRC, except that the first sample $s_1$ is fixed under infill asymptotics. That is, the known sub-population of hidden units with a certain trait is fixed. The size $n_1$ of $s_1$ is called the \emph{benchmark}.
 The proportion $m/n_2$ gives the \emph{multiplier}, which is an estimate of the prevalence $p$. Again, $m$ follows a hypergeometric distribution, 
 so the MME for $N$ is $\wh N_{MBM}=n_1n_2/{m}$, which is often called the multiplier estimator. $\wh N_{MBM}$ takes the same form as the Lincoln-Petersen CRC estimator. Asymptotic behaviors of $\wh N_{MBM}$, as summarized in Theorem \ref{asym_mbm}, are essentially the same as that of $\wh N_L$ for CRC.

\begin{theorem}
  $\wh N_{MBM}$ is consistent under the finite-population regime. 
  Under infill asymptotics, $\wh N_{MBM}$ is inconsistent with MSE $O(1)$.
  Under the outfill regime, when $n_1=c_1N$, and $n_2=c_2N$, $\wh N_{MBM}$ is inconsistent with MSE at least $O(N)$. $\P(|\wh N_{MBM}-N|<\varepsilon)\rightarrow 0$ for some $\varepsilon>0$.
\label{asym_mbm}
\end{theorem}

\subsubsection{$\bm k$-sample estimation}

We now consider the generalized setting of $k$ samples. In this scenario, we draw $k$ samples $s_1, \ldots, s_k\subseteq U$ with deterministic sizes $n_1, \ldots, n_k$ respectively. We assume the probability $p_j:=n_j/N$ of being observed in the $j$th sample is the same for each unit for $j=1, \ldots, k$. In each sample (say $s_j$), we give the observed units a label that is different for different $j$'s, and record the capture history $\mathcal H_{j,i}=(I_1^{(i)}, \ldots, I_j^{(i)})$ of each unit $i \in s_j$, where $I_l=1$ if $i\in s_l$ and 0 otherwise $(l\le j)$. Then an observation on a sequence of samples $\bm s=\{s_1, \ldots, s_k\}$ is a $2^k$ contingency table $T=\{T_{I_1\ldots I_k}\}_{I_1, \ldots, I_k \in \{0,1\}^{k}}$ \citep{fienberg1972multiple}, where the entry corresponding to $I_1, \ldots, I_k$ is $\sum_{i\in U} \mathbbm{1}(I_1^{(i)}=I_1, \ldots, I_k^{(i)}=I_k)$, the number of units with capture history $\mathcal H_k=(I_1, \ldots, I_k)$. Let $r$ be the sum of known entries in the contingency table -- only the entry $T_{0\ldots 0}$ is unobserved. 
In plain words, following the animal abundance example, researchers could instead draw $k$ random samples. In the first $k-1$ samples, animals that are captured will be given a mark that is unique for each sample. The contingency table summarizes the capture history for all observed animals -- how many animal(s) are observed in, or absent from, which sample(s).
From the contingency table we have $m_i$, the number of already marked individuals in $s_i$, and $M_i$, the total number of marked individuals in $U$ before $s_i$ is drawn. 
The sampling scheme then follows a generalized hypergeometric distribution:
\begin{equation}
\P(T|s_1, \ldots, s_k)=\frac{N!}{\prod_{I_1, \ldots, I_k \in \{0,1\}^{k}} T_{I_1\ldots I_k}!(N-r)!}\prod_{i=1}^k {N \choose {n_i}}^{-1}.
\label{prob:CRC}
\end{equation}

Maximizing the likelihood \eqref{prob:CRC} gives the MLE of $N$ as the solution of 
\begin{equation}
\l 1-\frac{r}{N} \r=\prod_{i=1}^k \l 1-\frac{n_i}{N}\r,
\label{equ:darroch}
\end{equation}
which is unique, finite and greater than $r$ if $s_1\cap \ldots \cap s_k$ is non-empty and $|s_i|<r$ for all $i\le k$ \citep{darroch1958multiple}. We restrict our interest to this case only. Setting $k=2$ recovers the Lincoln-Petersen estimator $\wh N_L$. Since finite-population and infill regimes for the two- and $k$-sample cases are similar in essence, we mainly discuss outfill asymptotics in this setting: for any finite $k$, we have $N, n_1, \ldots, n_k\rightarrow\infty$ with $n_i/N_i\rightarrow c_i\in (0,1)$ for $i=1, \ldots, k$, and $k_t$ may be finite or going to infinity. We assume the $c_i$'s are bounded away from 0 and 1. 
 Under outfill asymptotics with finite $k$, following from the delta method, the bias of the MLE is approximated by \citep{darroch1958multiple}
$$\E \l\wh N_{MLE}\r-N\sim\frac{\left[\frac{k-1}{N}-\sum\l \frac{1}{ N-n_i} \r\right]^2+\left[\frac{k-1}{ N^2}-\sum\l \frac{1}{ N-n_i} \r^2\right]}{2\left[ \frac{1}{ N-\E [r]}+\frac{k-1}{ N}-\sum\l \frac{1}{ N-n_i} \r \right]^2},$$
which is $O(1)$, and the asymptotic variance is $O(N)$, approximated by \citep{darroch1958multiple}
$${\v}(\wh N_{MLE})\sim\left[ \frac{1}{N-\E [r]}+\frac{k-1}{ N}-\sum_{i=1}^k \l \frac{1}{ N-n_i} \r \right]^{-1}.$$
Under outfill asymptotics with infinite sampling repetitions, we assume $\inf_{i\in[k]} p_i>0$. Then the magnitude of bias is bounded above by $N-\E [r]$, and hence by $N\prod_{i=1}^k (1-p_i)$. The variance is $O(N\prod_{i=1}^k (1-p_i))$. Therefore, as long as $k$ is increasing such that $N\prod_{i=1}^k (1-p_i)\rightarrow 0$, $\wh N_{MLE}$ will be MSE consistent for $N$. 





\section{Discussion}

Several features determine researchers' ability to learn about the size of a hidden set.  First, the structure of the set -- labeled units, ordering of the labels, or relational (network/graph) information -- can permit researchers to learn about the number of remaining units when a subset is observed.  Second, a feasible probabilistic query mechanism -- random sampling, or observation conditional on a unit trait or attribute -- must be available.  Third, a statistical estimator that enjoys desirable statistical properties must be chosen.  Some of these features may be under the control of researchers, while others may be intrinsic to the problem.  
Table \ref{tab:summary} summarizes the models that have been discussed in this paper, as well as consistency results of estimators in each model.

\begin{table}
\small
\begin{tabular}{lcccc}
\toprule
Problem  & Trait  & Sample & Estimator & Consistency \\ 
 \midrule
German tank  & \makecell{Consecutive\\ integers} & \makecell{Uniform random\\ draw from $U$}  & $\wh N_\text{MLE},\wh N_G,\wh N_2,\wh N_3$ & \makecell{Infill: consistent \\Outfill: MSE $O(1)$} \\ 
Binomial $N$ & $Y_i\sim$Bernoulli($p$) & $\{i\in U: Y_i=1\}$ & \makecell{$\wh N_\text{MLE}, \wh N_\text{MME}$, \\ $Q_{(k)}, \wh N'_\text{MLE}$ } & \makecell{Infill: consistent \\ Outfill: $\wh N_\text{MME}, \wh N'_\text{MLE}$ \\have MSE $O(1)$$^1$}\\
Waiting times & $T_i\sim$Exponential$(\lambda)$  & $\{i\in U: T_i<T\}$ & \multicolumn{2}{c}{Equivalent to Binomial $N$}\\
\multirow{2}{*}{NSUM}& \makecell{Network degree\\$d_i^U, d_i^V$} & \makecell{Uniform random\\draw from $V\setminus U$} & $\wh N_{NS}=M\frac{\sum d_i^U}{\sum d_i^V}$ & \makecell{Infill: inconsistent\\Outfill: MSE $O(1)$}\\
& \makecell{Network degree\\$d_i^s, d_i^U$} & \makecell{Uniform random\\draw from $U$} & $\wh N=n\frac{\sum d_i^U}{\sum d_i^s}$ & \makecell{Infill: inconsistent$^2$ \\Outfill: MSE $O(1)$}\\
HT & Cluster membership & \makecell{Uniform random\\draw from each\\sampled cluster} & $\wh N=\sum 1/p_i$ & \makecell{Infill: consistent\\Outfill: MSE $O(N)$}\\
$k$-sample CRC & Capture history & $\P(i\in s_j)=n_j/N, \forall i$ & $\wh N_L, \wh N_C, \wh N_{MLE}$ & \makecell{Infill: inconsistent\\Outfill $k=2$: MSE $O(N)$\\ Outfill $k\rightarrow\infty$: $\wh N_{MLE}$ \\consistent$^3$}\\
MBM & \multicolumn{4}{c}{Similar to two-sample CRC}\\
\bottomrule
\end{tabular}
\caption{Summary of models, estimators, and asymptotic results for estimating the size of a hidden set. Notes: 1. the error rate of $\wh N_{MLE}$ and $Q_{(k)}$ are given in Section \ref{subsec:binom} in terms of relative error; 2. conditioning on $|E_s|>0$; 3. if $N\prod_{i=1}^k(1-p_i)\rightarrow 0$. }
\label{tab:summary}
\end{table}

How should empirical researchers evaluate the statistical properties of estimators, design a study or choose a sample size? Many of these tasks are based on asymptotic arguments, and statistical claims about the large-sample performance of hidden set size estimators depend on specification of an appropriate asymptotic (or even non-asymptotic) regime. 
It is crucial to identify how the sample size increases, especially in relation to the target population, when asymptotic approximation or comparison is involved in population size estimation tasks. When designing a study, this may include determining the minimum sample size that leads to a desired precision \citep{robson1964sample, jensen1981sample}, or selecting an ``optimal'' sampling strategy (e.g. one-time larger sample versus multi-time repeated smaller samples). In data analysis, this may include establishing valid approximation to biases and variances or comparing the efficiency of different statistical approaches \citep{robson1964sample, bailey1951estimating, brownie1985analysis, mills2000estimating}. 
If the vast majority of the target population can be observed in one-step sampling, consistency under the trivial finite-population regime may be a goal when developing estimators. 
If the total population is fixed, and arbitrarily repeated i.i.d. samples can be obtained, then consistency under infill may justify the use of a statistical approach. If instead only one-time or finite-time sampling is permitted, in which the sample size is believed to reflect a proportion of the potentially large population, performance of estimators under outfill may be of more interest. 
We have shown that different asymptotic regimes can lead to dramatically different statistical properties. Some seemingly sensible estimators are inconsistent with different rates of MSE, and asymptotic claims for population size estimators under one regime may be of limited value for analyzing the general situation. 



In this review, we have focused on technical claims about the asymptotic properties of estimators, and have not discussed considerations for practical data collection.  For example, the waiting time model does not accommodate censoring or truncation of observations, but could be easily extended to do so.  Respondent recall bias in the network scale-up method may make the reported network degrees noisy estimates of the truth. The Horvitz-Thompson estimator relies on knowledge about marginal inclusion probabilities of each sampled individual, which may not be readily available when the size of the population is unknown.  While improved data collection strategies may not be able to mitigate poor asymptotic properties -- like inconsistency -- under a particular regime, better data may be able to reduce variance in finite samples.  

While we have discussed many of the most popular settings and methods for estimating the size of a hidden set, there are several other settings we have not covered. Respondent-driven sampling (RDS), snowball sampling and link-tracing sampling generate samples from hidden networks, and modeling the stochastic process underlying such sampling mechanism can be used to estimate hidden population sizes \citep{handcock2014estimating, crawford2017hidden,vincent2014estimating,vincent2016estimating}.  There is a large literature on CRC beyond what we have covered here. For example, there are approaches for CRC with an open population, with immigration, emigration, birth, and death \citep{schwarz1996general, young1998capture} or with heterogeneity in capture probabilities \citep{pollock1982capture, chao1987cap}.  CRC is also possible using data from network sampling designs \citep{khan2017one}. 
We have also not discussed species number estimation \citep{bunge2014estimating}, ``count distinct'' and streaming estimation problems \citep{fusy2007estimating,chassaing2006,Kane2010}, and genetic methods for population size estimation \citep{creel2003population,bellemain2005estimating}. In addition, we have not addressed the issue of entity resolution, or record de-duplication \citep{sadosky2015blocking}. The results presented in this paper suggest that researchers employing methods for estimating the size of a hidden set should evaluate the performance of estimators under deliberately specified asymptotic assumptions.  






\section*{Acknowledgements}

This work was supported by NIH grants NICHD/BD2K DP2 HD091799, NIH/NCATS KL2TR000140, NIMH P30MH062294, the Center for Interdisciplinary Research on AIDS, and the Yale Center for Clinical Investigation. 
We are grateful to 
Peter M. Aronow, 
Heng Chen,
Xi Fu,
and
Edward H. Kaplan
for helpful comments. 


\appendix

\section{Asymptotic normality and consistency}
We first introduce a simple lemma that helps to prove consistency or inconsistency based on asymptotic normality. 

\begin{lemma}[Asymptotic normality and consistency]
Suppose $a_t\l X_t-\nu_t \r\xrightarrow{L} N(0, \sigma^2)$ for a finite $\sigma$. Then $X_t-\nu_t\xrightarrow{P} 0$ if and only if $a_t\rightarrow\infty$ as $t\rightarrow\infty$.
\label{norm}
\end{lemma}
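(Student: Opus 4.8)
The plan is to prove both directions of the biconditional directly from the definition of convergence in distribution, treating $a_t$ as a deterministic sequence and splitting into the cases $a_t \to \infty$, $a_t \not\to \infty$.

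First I would establish the ``if'' direction: assume $a_t \to \infty$. Fix $\varepsilon > 0$. Since $a_t \to \infty$, for large $t$ we have $a_t \varepsilon$ as large as we like, so it suffices to bound $\P(|X_t - \nu_t| > \varepsilon) = \P(|a_t(X_t - \nu_t)| > a_t\varepsilon)$. Write $Z_t = a_t(X_t - \nu_t)$. Convergence in distribution $Z_t \xrightarrow{L} N(0,\sigma^2)$ gives, for continuity points $\pm M$ of the limiting CDF, $\limsup_t \P(|Z_t| > M) \le \P(|N(0,\sigma^2)| \ge M)$, which is small for $M$ large. Choosing $M$ with $\P(|N(0,\sigma^2)| \ge M) < \eta$ and then $t$ large enough that $a_t \varepsilon > M$ and the $\limsup$ bound is nearly attained yields $\P(|X_t-\nu_t|>\varepsilon) < 2\eta$ eventually, hence $X_t - \nu_t \xrightarrow{P} 0$. (A cleaner packaging: $a_t \to \infty$ and $Z_t = O_P(1)$ from tightness of convergent-in-distribution sequences, so $Z_t/a_t \xrightarrow{P} 0$ by Slutsky.)

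Next the ``only if'' direction, by contraposition: suppose $a_t \not\to \infty$. Then there is a subsequence $t_j$ along which $a_{t_j} \to a \in [0,\infty)$ is bounded. Along this subsequence, if $a > 0$, Slutsky gives $X_{t_j} - \nu_{t_j} = Z_{t_j}/a_{t_j} \xrightarrow{L} N(0,\sigma^2/a^2)$, a nondegenerate limit, so $X_{t_j}-\nu_{t_j}$ cannot converge to $0$ in probability; if $a = 0$, then for any $M$, $\P(|X_{t_j}-\nu_{t_j}| > \varepsilon) = \P(|Z_{t_j}| > a_{t_j}\varepsilon)$ and since $a_{t_j}\varepsilon \to 0$ while $Z_{t_j}$ has a nondegenerate limit with positive mass away from any neighborhood of $0$ (here one uses $\sigma > 0$, or if $\sigma = 0$ the statement is vacuous/degenerate and should be handled as a trivial sub-case), this probability stays bounded away from $0$. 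Either way $X_t - \nu_t \not\xrightarrow{P} 0$, completing the contrapositive.

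The main obstacle is not any deep estimate but careful bookkeeping about the regularity of the limit: the argument is clean when $\sigma > 0$, but one should note explicitly what happens when $\sigma = 0$ (the limit is the point mass at $0$, and then $a_t(X_t-\nu_t)\xrightarrow{P}0$ regardless, so the ``only if'' direction can fail unless $\sigma>0$ is assumed — the lemma statement does say ``for a finite $\sigma$'', and in all applications in the paper $\sigma > 0$, so I would simply remark that the nontrivial content requires $\sigma \neq 0$). The other point requiring a little care is that $a_t$ is a deterministic scalar sequence, so Slutsky's theorem applies cleanly along subsequences; I would state this explicitly to justify passing the bounded factor $1/a_{t_j}$ through the convergence in distribution.
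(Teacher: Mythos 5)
Your proof is correct, and for the ``if'' direction it takes a genuinely different (and cleaner) route than the paper's. The paper proves that direction by centering at $\E(X_t-\nu_t)$ and applying Chebyshev's inequality to $a_t\l X_t-\nu_t-\E(X_t-\nu_t)\r$, which tacitly assumes that the mean of $a_t(X_t-\nu_t)$ tends to $0$ and its variance to $\sigma^2$; convergence of moments does not follow from convergence in distribution alone (the moments need not even exist), so your tightness argument --- $Z_t=a_t(X_t-\nu_t)=O_P(1)$ by the portmanteau/tightness of weakly convergent sequences, hence $Z_t/a_t\xrightarrow{P}0$ when $a_t\to\infty$ --- is the more self-contained proof under the stated hypothesis. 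For the ``only if'' direction the two arguments are essentially the same idea: restrict to a subsequence on which $a_t$ stays bounded and use the nondegenerate normal limit to keep $\P\left\{|X_t-\nu_t|>\varepsilon\right\}$ bounded away from zero; the paper works directly with the bound $a_{t_0}<m$ and the event $\left\{a_{t_0}(X_{t_0}-\nu_{t_0})>\varepsilon m\right\}$, while you pass to a convergent subsequence $a_{t_j}\to a$ and split into $a>0$ (Slutsky, nondegenerate limit) and $a=0$ (mass of the normal away from the origin). Your explicit remark that the nontrivial content requires $\sigma>0$ is a real point the paper glosses over --- its lower bound $\tfrac{1}{2}\P(Y>\varepsilon m)$ is zero when $\sigma=0$, and the equivalence genuinely fails in that degenerate case --- so that caveat is worth keeping.
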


\begin{proof}
Assume $a_t\rightarrow\infty$ when $t\rightarrow\infty$. Then for any $\varepsilon>0$, there exists $T>0$ such that $\lvert\E(X_t-\nu_t)\rvert<\varepsilon/2$ for any $t>T$. 
For such $t$, since $|X_t-\nu_t|\le \lvert X_t-\nu_t-\E(X_t-\nu_t) \rvert + \lvert\E(X_t-\nu_t)\rvert$, applying the union bound and Chebyshev's inequality yields
\begin{align*}
&\P\left\{ \lvert X_t-\nu_t \rvert>\varepsilon \right\} =\P\left\{ \lvert X_t-\nu_t-\E(X_t-\nu_t) + \E(X_t-\nu_t)\rvert>\varepsilon \right\} \\
&\qquad \le\P\left\{ \lvert X_t-\nu_t-\E(X_t-\nu_t) \rvert >\frac{\varepsilon}{2} \right\}+\P\left\{ \lvert\E(X_t-\nu_t)\rvert>\frac{\varepsilon}{2} \right\} \le \l \frac{2\sigma}{\varepsilon a_t}  \r ^2 \rightarrow 0.
\end{align*}

If $a_t$ does not go to infinity, then for some $m>0$ and any $M>0$, there exists $t>M$ such that $a_t<m$ for such $t$. Pick $\varepsilon>0$, then there exists $T>0$ such that $\P\left\{ a_t(X_t-\nu_t)>\varepsilon m \right\}\ge 1/2\cdot \P\left\{ Y>\varepsilon m \right\}$ for $Y\sim N(0, \sigma^2)$ for all $t>T$. Specially, for any $M>0$, there exists $t_0>\max\{T, M\}$ such that $a_{t_0}<m$ holds. Then
\begin{align*}
\P\left\{ \left| X_{t_0}-\nu_{t_0} \right|>\varepsilon \right\}& \ge \P\left\{ a_{t_0}(X_{t_0}-\nu_{t_0}) > \varepsilon m\right\} \ge \frac{1}{2}\P(Y>\varepsilon m)>0,
\end{align*}
indicating that $\{X_t-\nu_t\}$ does not converge to 0.
\end{proof}

\section{Proof of theorems}

\begin{proof}[Proof of Theorem \ref{thm_tank}] 
Rates of biases and variances of $\wh N_{G}, \wh N_2$ and $\wh N_3$ follow from the non-asymptotic claims of biases and variances given by \citet{goodman1952serial}, as stated in the main text. Consistency under the finite-population regime follows directly from setting $n= N$ in each of the estimators. Consistency under infill of $\wh N_G, \wh N_2, \wh N_3$ follows from the unbiasedness of these estimators, while that of $\wh N_{MLE}$ follows from the fact that $\max_{i\in [k_t]}X_{i(n)}\xrightarrow{P} N$ as $k_t\rightarrow \infty$.

We show the inconsistency of $\wh N_G$ (when the initial label is 1) and $\wh N_3$ (when the initial label is unknown) under outfill. The results can be derived similarly for $\wh N_{MLE}$ and $\wh N_2$, since they are shifted and scaled versions of $\wh N_G$, and the corresponding proofs for inconsistency also amount to bounding the probability that $X_{(n)}$ equals a specific value (as done below).

For $\wh N_G$, recall that
$$\wh N_G=\frac{n+1}{n}X_{(n)}-1,$$
where $\wh N_G, n$ and $N$ are implicitly indexed by $t$ as defined under outfill asymptotics. However, we omit the subscript for simpler notation.
For any $0<\varepsilon<1/c-1$, there exists $T_0\in N_+$ such that
\begin{equation}
\frac{N}{n+1}\le \frac{1}{c}+\frac{\varepsilon}{2}, \quad \text{and} \quad \frac{n}{n+1}\ge 1-\frac{\varepsilon}{2}
\label{pf411}
\end{equation}
for any $t>T_0$, where $n, N$ are indexed by $t$. Then when $t>T_0$, 
\begingroup
\allowdisplaybreaks
\begin{align}
&\notag\P(\wh N_G=N) = \P\left( \frac{n+1}{n}X_{(n)}-1=N \right)\\
\notag &\qquad= \P\left( X_{(n)}-N=-\frac{N}{n+1}+\frac{n}{n+1} \right)\\
\notag &\qquad\le \P\left( X_{(n)}\ge N-\frac{1}{c}+1-\varepsilon \right)\\
\notag &\qquad\le 1-\P\left( X_{(n)}=\lceil N+\frac{c-1}{c}-\varepsilon \rceil-1 \right)\\
\notag &\qquad=1-{N\choose n}^{-1}{\lceil N+\frac{c-1}{c}-\varepsilon\rceil -2 \choose n-1}\\
&\qquad\le 1-\frac{n}{N}\left( \frac{\lceil N+\frac{c-1}{c}-\varepsilon\rceil-n}{N-n+1} \right)^{n-1}
  \rightarrow 1-\exp\left( -\frac{c}{1-c} \right)<1.
\label{pf412}
\end{align}
\endgroup

\par Then we show the inconsistency of $\wh N_3$ with unknown initial number $u$. Let $Y=X-u$, then $Y_{(n)}-Y_{(1)}$ and $X_{(n)}-X_{(1)}$ follow the same distribution. Likewise, for any $\varepsilon>0$, there exists $T_1$ such that
\begin{equation}
\frac{n-1}{n+1}\ge 1-\frac{\varepsilon}{2}, \quad\text{and}\quad -\frac{2N}{n+1}\ge -\frac{2}{c}-\frac{\varepsilon}{2}
\label{pf413}
\end{equation}
for any $t>T_1$. Then
\begin{align}
\notag&\P\left(\wh N_3=N\right) =\P\left( Y_{(n)}-Y_{(1)}-N=\frac{n-1-2N}{n+1} \right)\\
\notag&\qquad\le \P\left(Y_{(n)}-Y_{(1)}\ge N+1-\frac{2}{c}-\varepsilon\right)\\
\notag&\qquad\le 1-\P\left(Y_{(n)}=\lceil{N+1-\frac{2}{c}-\varepsilon}\rceil-1\right)\\
\notag&\qquad=1-{N\choose n}^{-1}{\lceil{N+1-\frac{2}{c}-\varepsilon}\rceil-2 \choose n-1}\\
&\qquad\le 1-\frac{n}{N}\left(\frac{\lceil{N+1-\frac{2}{c}-\varepsilon}\rceil-n}{N-n+1} \right)^{n-1} 
  \rightarrow 1-\exp\left(-\frac{c}{1-c}\right)<1.
\label{pf414}
\end{align}
Since $\wh N_G$ and $\wh N_3$ take discrete values, (\ref{pf412}) and (\ref{pf414}) imply the inconsistency of $\wh N_G$ and $\wh N_3$.
\end{proof}

\begin{proof} [Proof of Theorem \ref{nsum_asym1}]
We first show the conditional distribution of $d_i^U$ given $d_i^V$. Recall that $\pi$ is the edge probability $\P(\{i,j\}\in E)$ for any $i\in V\setminus U$ and $j\in V$. Then, 
\begin{align*}
\P(d_i^U=k\mid d_i^V) & =\frac{\P(d_i^U=k, d_i^V)}{\P(d_i^V)}=\frac{{N\choose k}\pi^k(1-\pi)^{N-k}{M-N-1\choose d_i^V-k}\pi^{d_i^V-k}(1-\pi)^{M-N-1-d_i^V+k}}{{M-1\choose d_i^V}\pi^{d_i^V}(1-\pi)^{M-1-d_i^V}}\\
&=\frac{{N\choose k}{M-N-1\choose d_i^V-k}}{{M-1\choose d_i^V}},
\end{align*}
$d_i^U$ follows a hypergeometric distribution given $d_i^V$. Therefore,
\[
\E\l \wh N_{NS} \mid d_i^V, i=1, \ldots, n \r=M\frac{\sum d_i^V\cdot \frac{N}{M-1}}{\sum d_i^V}=\frac{MN}{M-1},
\]
and $\E(\wh N_{NS})-N=N/(M-1)$.

Since we impose no assumption on the distribution of network degrees within $U$, even when we sample all units in $V\setminus U$, we cannot recover $N$ deterministically. (For example, when there exists $j\in U$ such that $d_j^{V\setminus U}=0$.)
Under infill asymptotics, repeated i.i.d. samples are taken and the estimates are averaged. The final estimate therefore converges to a quantity with constant bias $N/(M-1)$.
We now derive the asymptotic distribution of $\wh N_{NS}$ under outfill, where $N/M\rightarrow c_1$ and $n/(M-N)\rightarrow c_2$. 
First, 
\begin{equation}
\frac{\sum d_i^V}{nM}=\frac{\sum d_i^s}{n(n-1)}\frac{n-1}{M}+\frac{\sum d_i^{V\setminus s}}{n(M-n)}\frac{M-n}{M},
\label{decomp1}
\end{equation}
where $\sum d_i^s/2\sim$ Binomial$\l{n\choose 2}, \pi\r$, and $\sum d_i^{V\setminus s}\sim$ Binomial$\l n(M-n), \pi \r$. By the central limit theorem and Slutsky's theorem,
\begin{equation}
\sqrt{n(n-1)}\l \frac{\sum d_i^s}{n(n-1)}-\pi \r\xrightarrow{L}N(0,2\pi(1-\pi)),
\label{clt1}
\end{equation}
\begin{equation}
\sqrt{(M-n)n}\l \frac{\sum d_i^{V\setminus s}}{(M-n)n}-\pi \r\xrightarrow{L} N(0, \pi(1-\pi)).
\label{clt2}
\end{equation}
Multiply (\ref{clt1}-\ref{clt2}) by $(n-1)/\sqrt{n(n-1)}$ and $\sqrt{(M-n)/n}$ respectively and by Slutsky's theorem we have
\begin{equation}
M\l \frac{\sum d_i^s}{n(n-1)}\cdot\frac{n-1}{M}-\frac{\pi(n-1)}{M} \r\xrightarrow{L} N(0, 2\pi(1-\pi)),
\label{clt11}
\end{equation}
\begin{equation}
M\l \frac{\sum d_i^{V\setminus s}}{(M-n)n}\cdot\frac{M-n}{M}-\frac{\pi(M-n)}{M} \r\xrightarrow{L} N\l 0, \pi(1-\pi)\frac{1-c_2(1-c_1)}{c_2(1-c_1)} \r.
\label{clt21}
\end{equation}
Since $\sum d_i^s$ and $\sum d_i^{V\setminus s}$ are mutually independent, combining (\ref{decomp1}), (\ref{clt11}) and (\ref{clt21}) yields
\begin{equation}
\sqrt{nN}\l \frac{\sum d_i^V}{nM}+\frac{\pi}{M}-\pi \r\xrightarrow{L} N\l 0, \pi(1-\pi)c_1[1+c_2(1-c_1)]\r.
\label{dist1}
\end{equation}

Also,
\begin{equation*}
\sqrt{nN}\l \frac{\sum d_i^U}{nN}-\pi \r\xrightarrow{L}N(0, \pi(1-\pi)).
\end{equation*}
Divide both sides by $\sum d_i^V/nM$, and Slutsky's theorem yields
\begin{equation*}
\frac{\frac{\sum d_i^U}{n}\cdot\frac{\sqrt{nN}}{N}}{\sum d_i^V/(nM)}-\frac{\sqrt{nN}\pi}{\sum d_i^V/(nM)}\xrightarrow{L} N\l 0, \frac{1-\pi}{\pi}\r,
\end{equation*}
which can be rewritten as
\begin{equation}
\frac{\sqrt{nN}}{N}\l \wh N-N \r+\sqrt{nN}\l 1-\frac{\pi}{\sum d_i^V/nM} \r\xrightarrow{L} N\l 0, \frac{1-\pi}{\pi} \r.
\label{decomp0}
\end{equation}

Learning about the asymptotic behavior of $\wh N-N$ requires characterizing the second term on the left-hand side of (\ref{decomp0}). Define a sequence of random variables and functions
\[
X_t=\frac{\sum d_i^V}{nM}+\frac{\pi}{M} \quad \text{and} \quad g_t(x)=1-\frac{\pi}{x-\frac{\pi}{M}},
\]
where $n, M$ are indexed by $t$, and a function $g(x)=1-\pi/x$. Then 
\begin{equation}
\sqrt{nN}\l 1-\frac{\pi}{\sum d_i^V/nM}\r=\sqrt{nN} g_t(X_t)=\sqrt{nN} [g_t(X_t)-g(X_t)]+\sqrt{nN} [g(X_t)-g(\pi)]
\label{decomp2}
\end{equation}
since $g(\pi)=0$. The first term in (\ref{decomp2}) satisfies
\begin{align*}
\sqrt{nN} [g_t(X_t)-g(X_t)] & =\sqrt{nN}\l \frac{\pi}{\sum d_i^V/nM+\pi/M}-\frac{\pi}{\sum d_i^V/nM} \r\\
&=-\frac{\pi^2\sqrt{nN}}{M}\cdot\frac{1}{\frac{\sum d_i^V}{nM}\l \frac{\sum d_i^V}{nM}+\frac{\pi}{M} \r}\xrightarrow{P} -\sqrt{c_1c_2(1-c_2)},
\end{align*}
and the second term in (\ref{decomp2}) satisfies
\[
\sqrt{nN}[g(X_t)-g(\pi)]\xrightarrow{L} N\l 0, [g'(\pi)]^2\pi(1-\pi)c_1[1+c_2(1-c_1)] \r
\]
by the delta method. Therefore the quantity in (\ref{decomp2})
\begin{equation}
\sqrt{nN}\l 1-\frac{\pi}{\sum d_i^V/nM}\r\xrightarrow{L} N\l -\sqrt{c_1c_2(1-c_2)}, \frac{(1-\pi)c_1[1+c_2(1-c_1)]}{\pi} \r
\label{equ1}
\end{equation}
by Slutsky's theorem. Combining (\ref{decomp0}) and (\ref{decomp2}), we have
$
 \wh N-N\xrightarrow{L} N(c_1, \sigma^2),
$
where $\sigma^2$ is bounded between,
$$
  \frac{(1-\pi)c_1}{\pi c_2(1-c_1)}\left[ 1+c_1(1+c_2(1-c_1))\pm 2\sqrt{c_1(1+c_2(1-c_1))} \right].
$$
Therefore, $\wh N$ is asymptotically normal with bias $c_1$ and variance $O(1)$, and following from Lemma \ref{norm}, inconsistent under the outfill regime.
\end{proof}

\begin{proof} [Proof of Theorem \ref{nsum_asym2}]
We derive the asymptotic normal distribution of $\wh N$ under the outfill regime that $n/N\rightarrow c\in (0,1)$. Note that

\begin{equation*}
\wh N=n+n\frac{\sum d_i^{U\setminus s}}{\sum d_i^s},
\end{equation*}
and $\sum d_i^s/n(n-1)\xrightarrow{P} \pi$. Also,
\begin{equation}
\sqrt{n(N-n)}\l \frac{\sum d_i^{U\setminus s}}{(N-n)n}-\pi \r\xrightarrow{L} N(0, \pi(1-\pi))
\label{asym_norm2}
\end{equation}
by the central limit theorem. Therefore, by Slutsky's theorem,
\[
\sqrt{n(N-n)}\l \frac{\sum d_i^{U\setminus s}}{\sum d_i^s}\cdot\frac{n(n-1)}{(N-n)n}-\pi\cdot\frac{n(n-1)}{\sum d_i^s} \r\xrightarrow{L} N\l 0, \frac{1-\pi}{\pi} \r.
\]
Multiply both sides by $\sqrt{n(N-n)}/(n-1)$ and Slutsky's theorem yields
\begin{equation*}
n\l \frac{\sum d_i^{U\setminus s}}{\sum d_i^s}-\pi\frac{n(N-n)}{\sum d_i^s} \r\xrightarrow{L} N\l 0, \frac{(1-\pi)(1-c)}{\pi c} \r,
\end{equation*}
which can be rewritten as
\begin{equation}
\left[n\frac{\sum d_i^{U\setminus s}}{\sum d_i^s}-(N-n)\right]+(N-n)\l 1-\frac{\pi}{\sum d_i^s/n^2} \r\xrightarrow{L} N\l 0, \frac{(1-\pi)(1-c)}{\pi c} \r.
\label{decomp3}
\end{equation}

We need to characterize the second term on the left-hand side of (\ref{decomp3}) in order to derive the asymptotic distribution of $\wh N$. By the central limit theorem,
\[
\sqrt{n(n-1)}\l \frac{\sum d_i^s}{n(n-1)}-\pi \r\xrightarrow{L} N(0, 2\pi(1-\pi)),
\]
and therefore
\begin{equation}
(N-n)\l \frac{\sum d_i^s}{n^2}+\frac{\pi}{n}-\pi \r\xrightarrow{L} N\l 0, \frac{2\pi(1-\pi)(1-c)^2}{c^2} \r.
\label{clt3}
\end{equation}
 Define
\[
Y_t=\frac{\sum d_i^s}{n^2}+\frac{\pi}{n}, \quad\text{and}\quad h_t(y)=1-\frac{\pi}{y-\frac{\pi}{n}},
\]
where $n$ is indexed by $t$. Also define $h(y)=1-\pi/y$. Then
\begin{equation}
(N-n)\l 1-\frac{\pi}{\sum d_i^s/n^2} \r=(N-n)[h_t(Y_t)-h(Y_t)]+(N-n)[h(Y_t)-h(\pi)]
\label{decomp4}
\end{equation}
since $h(\pi)=0$. The first term in (\ref{decomp4}) is
\begin{align*}
(N-n)[h_t(Y_t)-h(Y_t)] &= (N-n)\l \frac{\pi}{\sum d_i^s/n^2+\pi/n}-\frac{\pi}{\sum d_i^s/n^2} \r\\
&=-(N-n)\frac{\frac{\pi^2}{n}}{\frac{\sum d_i^s}{n^2}\l\frac{\sum d_i^s}{n^2}+\frac{\pi}{n}\r} \xrightarrow{P} -\frac{1-c}{c},
\end{align*}
and for the second term in (\ref{decomp4}),
\[
(N-n)[h(Y_t)-h(\pi)]\xrightarrow{L} N \l 0, [h'(\pi)]^2\frac{2\pi(1-\pi)(1-c)^2}{c^2} \r
\]
by the delta method. Hence the quantity on the left-hand side of (\ref{decomp4}) satisfies
\begin{equation}
(N-n)\l 1-\frac{\pi}{\sum d_i^s/n^2} \r\xrightarrow{L} N\l -\frac{1-c}{c}, \frac{2(1-\pi)(1-c)^2}{c^2\pi} \r.
\label{equ2}
\end{equation}
Combine (\ref{decomp3}) and (\ref{equ2}), 
\[
n\frac{\sum d_i^{U\setminus s}}{\sum d_i^s}-(N-n)\xrightarrow{L} N\l \frac{1-c}{c}, \tau^2 \r,
\]
where $\tau^2$ is bounded between
\[
\frac{(1-\pi)(1-c)}{\pi c}\left[ 1+\frac{2(1-c)}{c}\pm 2\sqrt\frac{2(1-c)}{c} \right].
\]
$\wh N$ is therefore asymptotically normal with bias $(1-c)/c$ and variance $O(1)$ under outfill. Following from Lemma \ref{norm}, it is inconsistent under the outfill regime.
\end{proof}

\begin{proof}[Proof of Theorem \ref{thm:ht}]
We denote the first and second order inclusion probability of any individual from the $h, l$th cluster as $p_{h}^{(t)}, p_{l}^{(t)}$ and $p_{hl}^{(t)}$ respectively. The superscript $(t)$ corresponds to the sequence of samples and populations specified by the asymptotic regime. Let $X_h$ be the number of individuals sampled from the $h$th cluster. Then $[X_1^{(t)},\ldots, X_H^{(t)}]^T\sim \text{Multinomial} \l n_t, (\frac{1}{H},\ldots,\frac{1}{H})^T \r$ for $t=1, 2, ...$.

The marginal probability that unit $i$ in cluster $h$ is sampled is 
\[ p_{i(h)} = \sum_{j=0}^{n} {n\choose j} \l \frac{1}{H} \r^j \l 1-\frac{1}{H} \r^{n-j}\frac{j}{N_h}=\frac{n}{H\cdot N_h}, \] 
and the joint probability that two units $i, j$ are sampled from clusters $h$ and $l$ ($h\neq l$) is 
\[ p_{i(h)j(l)} = \sum_{p,q} {n\choose {p\ q\ n-p-q}} \l \frac{1}{H} \r^{p}\l \frac{1}{H} \r^{q} \l 1-\frac{2}{H} \r^{n-p-q}\frac{pq}{N_{h}N_{l}}=\frac{n^2-n}{H^2 N_{h} N_{l}}. \]

Since the marginal and joint probabilities are uniform for different $i$ or different combinations of $(i,j)$, we omit the subscripts $i$ or $j$ for simplicity.
We now calculate the variance of the HT estimator $\wh N$ (for one-time sampling). First, according to \citet{horvitz1952generalization}, when $k_t=1$,
\begin{align}
\notag \v\l \wh N^{(t)} \r &=\frac{1}{2}\sum_{h\neq l\in [H_t]} N_h^{(t)}N_l^{(t)} \l p_{h}^{(t)}p_{l}^{(t)}-p_{hl}^{(t)} \r \l \frac{1}{p_{h}^{(t)}}-\frac{1}{p_{l}^{(t)}} \r^2\\
&=\sum_{h=1}^{H_t}\sum_{l=h+1}^{H_t} \frac{\l N_h^{(t)}-N_l^{(t)} \r^2}{n_t}.
\label{var}
\end{align}

\par 1) Under the infill regime, $n_t=n$, $H_t=H$ and $N_h^{(t)}=N_h$ for any $t$, so (\ref{var}) is $O(1)$. The number of samples $k_t$ goes to infinity as $t$ increases, and under $k_t$-time sampling, $\v\l \wh N^{(t)} \r=O(\frac{1}{k_t})$.  
Therefore $\wh N$ is MSE consistent, and also consistent, under infill asymptotics.

\par 2) Under the outfill regime, $n_t=c_tN_t$, $H_t=Ht$ and $N_h^{(t)}=N_h$, where $c_t\rightarrow c\in (0,1)$. The HT estimator is $\wh N^{(t)}=\sum_{h=1}^{H_t} X_h^{(t)}/p_h^{(t)}$, where $\bm X^{(t)}$ is multinomial $\l c_t Nt, (\frac{1}{H_t}, \ldots, \frac{1}{H_t})^T \r$. Then $\wh N^{(t)}\stackrel{d}{=}\sum_{h=1}^H Y_h^{(t)}/p_h^{(t)}$, where $\bm Y^{(t)}$ is multinomial $\l c_tNt, (\frac{1}{H}, \ldots, \frac{1}{H})^T \r$. Then
\begin{equation}
\sqrt{c_tNt}\l \frac{\bm Y^{(t)}}{c_tNt}-\left[ \frac{1}{H}, \ldots, \frac{1}{H} \right]^T \r\xrightarrow{L} MVN(0, \Sigma),
\label{clt4}
\end{equation}
where
\begin{equation*}
\Sigma_{H\times H}=\begin{bmatrix} \frac{1}{H}\l 1-\frac{1}{H} \r & -\frac{1}{H^2} & \cdots &  -\frac{1}{H^2}\\
 -\frac{1}{H^2} &  \frac{1}{H}\l 1-\frac{1}{H}\r & \cdots &  -\frac{1}{H^2}\\
\vdots & \vdots & \ddots & \vdots\\
-\frac{1}{H^2} & -\frac{1}{H^2} & \cdots &  \frac{1}{H}\l 1-\frac{1}{H}\r\end{bmatrix}.
\end{equation*}
Denote $\bm{\omega}^{(t)}=\left[\frac{1}{p_1^{(t)}}, \ldots, \frac{1}{p_{H}^{(t)}}\right]^T=\left[ \frac{HN_1}{c_tN}, \ldots, \frac{HN_H}{c_tN} \right]^T$, then $\wh N^{(t)}={\bm \omega^{(t)}}^T\bm Y^{(t)}$. Also, define $\bm \omega=\lim_{t\rightarrow\infty}\bm\omega^{(t)}=\left[ \frac{HN_1}{cN}, \ldots, \frac{HN_H}{cN} \right]^T$.

Applying the delta method to (\ref{clt4}) yields 
\begin{equation}
\sqrt{c_tNt}\l \frac{\bm\omega^T \bm Y^{(t)}}{c_tNt}- {\bm\omega}^T\left[ \frac{1}{H}, \ldots, \frac{1}{H} \right]^T \r\xrightarrow{L} N(0, \bm\omega^T\Sigma\bm\omega).
\label{delta1}
\end{equation}
Since $c_t\rightarrow c$, by Slutsky's theorem, (\ref{delta1}) leads to

\begin{equation}
\frac{1}{\sqrt t}\l\wh N-Nt\r \xrightarrow{L}N\l 0, cN\sigma^2\r,
\end{equation}
where $\sigma^2={\bm \omega}^T\Sigma \bm\omega=\frac{H\sum_{h=1}^H N_h^2-N^2}{c^2N^2}$. i.e. the variance of $\wh N^{(t)}$ is $O(t)$, which goes to infinity as $t$ increases. It follows from Lemma \ref{norm} that the HT estimator is inconsistent under outfill asymptotics.
\end{proof}

\begin{proof}[Proof of Theorem \ref{chapman}]
Finite-sample claims follow from \citet{chapman1951some}. Setting $n_2=N$ leads to consistency under the finite-population regime. Behavior under infill asymptotics follows from the biases of $\wh N_L$ and $\wh N_C$.

We show the inconsistency of $\wh N_L$ and $\wh N_C$ under the special outfill regime that $n_1=c_1N, n_2=c_2N$ with $N$ increasing, and $n_1, n_2, N$ are indexed by $t$ but the subscripts are omitted for simplicity.

For $\wh N_L$, we prove that $\P(|\wh N_L-N|<\varepsilon)\rightarrow 0$ for some $\varepsilon>0$.
If $c_1c_2N\notin \mathbbm Z$, there exists $b>0$ such that $\lceil c_1c_2N\rceil -c_1c_2N\ge 1/b$ and $c_1c_2N-\lfloor c_1c_2N\rfloor\ge 1/b$. Arbitrarily choose $\eta>0$, pick
\begin{equation*}
0<\varepsilon<\frac{1}{2b(c_1+\eta)(c_2+\eta)}.
\end{equation*}
There exists $T_0>0$ such that 
\begin{equation*}
\frac{n_1n_2}{N(N-\varepsilon)}\le (c_1+\eta)(c_2+\eta), \quad \frac{n_1n_2}{N(N+\varepsilon)}\le (c_1+\eta)(c_2+\eta)
\end{equation*}
for all $t>T_0$. Note that by the choice of $\varepsilon$, 
\begin{equation}
\frac{n_1n_2}{N-\varepsilon}-\frac{n_1n_2}{N}\le \frac{1}{2b}, \quad \frac{n_1n_2}{N}-\frac{n_1n_2}{N+\varepsilon}\le \frac{1}{2b}
\end{equation}
for all $t>T_0$. Then 
\begin{align}
\P(|\wh N_L-N|<\varepsilon) &=\P\left( \frac{n_1n_2}{N+\varepsilon}\le m\le\frac{n_1n_2}{N-\varepsilon} \right)\le \P\left( c_1c_2N-\frac{1}{2b}\le m \le c_1c_2N+\frac{1}{2b}\right).
\label{pf_mbm}
\end{align}
Note that the interval in (\ref{pf_mbm}) contains no integer, i.e. $\P(|\wh N_L-N|<\varepsilon)=0$, if $c_1c_2N$ is not an integer. Otherwise, it contains exactly one integer $c_1c_2N$. Therefore, continuing from (\ref{pf_mbm}), we have (denote $x=c_1c_2N$)
\begin{align}
\notag\P(|\wh N_L-N|<\varepsilon) &\le \P(m=c_1c_2N)=\frac{{n_1\choose x}{N-n_1\choose n_2-x}}{{N\choose n_2}}\\
\notag&=\frac{(N-n_1)!n_1!(N-n_2)!n_2!}{N!(n_1-x)!(n_2-x)!(N-n_1-n_2+x)!x!}\\
&\le \frac{e^4}{2\pi^{5/2}}\frac{(N-n_1)^{N-n_1+\frac{1}{2}}n_1^{n_1+\frac{1}{2}}(N-n_2)^{N-n_2+\frac{1}{2}}n_2^{n_2+\frac{1}{2}}}{N^{N+\frac{1}{2}}(n_1-x)^{n_1-x+\frac{1}{2}}(n_2-x)^{n_2-x+\frac{1}{2}}(N-n_1-n_2+x)^{N-n_1-n_2+x+\frac{1}{2}}x^{x+\frac{1}{2}}}\label{stirling}\\
&:=\frac{e^4}{2\pi^{5/2}}\phi(N)\label{ratio},
\end{align}
where the bound in (\ref{stirling}) is due to Stirling's formula. Consider the function $l(N)=(x(N)+{1}/{2})\log x(N)$. Taking derivative yields $l'(x)=x'(N)\left[\log x(N)+1+\frac{1}{2x(N)}\right]$. Take logarithm in (\ref{ratio}) and we have
\begin{align*}
\frac{\text{d} \log\phi(N)}{\text{d}N}&=(1-c_1)\log(N-n_1)+c_1\log n_1+(1-c_2)\log(N-n_2)+c_2\log n_2\\
&-\log N-c_1(1-c_2)\log(n_1-m)-c_2(1-c_1)\log(n_2-m)\\
&-(1-c_1-c_2+c_1c_2)\log(N-n_1-n_2+m)-c_1c_2\log m\\
&+(1-c_1)+c_1+(1-c_2)+c_2-1-c_1(1-c_2)-c_2(1-c_1)-(1-c_1-c_2+c_1c_2)-c_1c_2-\frac{1}{2N}\\
&=-\frac{1}{2N}<0.
\end{align*}
Also, 
$$\frac{\d^2\log\phi(N)}{\d N^2}=\frac{1}{2N^2}>0,$$ 
so $\log\phi(N)$ is convex on $(0,\infty)$. By the convexity of $\log\phi(N)$ we have
\begin{align*}
\log\phi(N) & \le \log \phi(N-1) + \frac{\d\log\phi(x)}{\d x}\bigg\rvert_{x=N} \\
& \le \log \phi(N-2)+\frac{\d\log\phi(x)}{\d x}\bigg\rvert_{x=N-1}+\frac{\d\log\phi(x)}{\d x}\bigg\rvert_{x=N}\\
& \le \ldots \le \log\phi(1)+\sum_{j=2}^N \frac{\d\log\phi(x)}{\d x}\bigg\rvert_{x=j}\\
& =\log\phi(1)-\sum_{j=2}^N \frac{1}{2j}\rightarrow -\infty,
\end{align*}
which implies that $\P(|\wh N_{L}-N|<\varepsilon)\rightarrow 0$ under the outfill regime when $n_1=c_1N, n_2=c_2N$ for the $\varepsilon>0$ we choose.

Inconsistency of the Chapman CRC estimator under the same regime follows from an essentially identical proof as above. We still pick
$$
0<\varepsilon<\frac{c_1c_2}{4a(c_1+\eta)(c_2+\eta)},
$$
so that there exists $T_0>0$ with
\begin{equation}
\frac{(n_1+1)(n_2+1)}{(N+1)(N-\varepsilon+1)}\le (c_1+\eta)(c_2+\eta), \quad \frac{(n_1+1)(n_2+1)}{(N+1)(N+\varepsilon+1)}\le (c_1+\eta)(c_2+\eta)
\label{pf631}
\end{equation}
for all $t>T_0$, and there exists $T_1>0$ such that
\begin{equation}
c_1c_2N-\frac{c_1c_2}{4a}\le \frac{(n_1+1)(n_2+1)}{N+1}\le c_1c_2N+\frac{c_1c_2}{4a}
\label{pf632}
\end{equation}
for all $t>T_1$. Then, combining (\ref{pf631}) and (\ref{pf632}) yields
\begin{align}
\notag\P(|\wh N_C-N|<\varepsilon) & =\P\left( \frac{(n_1+1)(n_2+1)}{N+\varepsilon+1}-1\le m\le \frac{(n_1+1)(n_2+1)}{N-\varepsilon+1}-1\right)\\
&\le\P\left( c_1c_2\left( N-\frac{1}{2a}\right)-1\le m\le c_1c_2\left( N+\frac{1}{2a} \right)-1 \right)\label{pf633}
\end{align}
for any $t>\max\{T_0, T_1\}$. Then, (\ref{pf633}) is 0 if $c_1c_2N$ is non-integer, and otherwise
\begin{align*}
RHS &=\P\l m=c_1c_2N-1:=x-1 \r\\
& \le \frac{e^4\phi(N)}{2\pi^{5/2}}\cdot\frac{(n_1-x)^{n_1-x+\frac{1}{2}}(n_2-x)^{n_2-x+\frac{1}{2}}(N-n_1-n_2+x)^{N-n_1-n_2+x+\frac{1}{2}}x^{x+\frac{1}{2}}}{(n_1-x+1)^{n1-x+\frac{3}{2}}(n_2-x+1)^{n_2-x+\frac{3}{2}}(N-n_1-n_2+x-1)^{N-n_1-n_2+x-\frac{1}{2}}(x-1)^{x-\frac{1}{2}}}\\
&= \frac{e^4\phi(N)}{2\pi^{5/2}}\cdot\frac{\left( 1-\frac{1}{n_1-x+1}\right)^{n_1-x+\frac{1}{2}}}{n_1-x+1}\cdot\frac{\left( 1-\frac{1}{n_2-x+1} \right)^{n_2-x+\frac{1}{2}}}{n_2-x+1}\cdot\frac{N-n_1-n_2+x}{\left( 1-\frac{1}{N-n_1-n_2+x} \right)^{N-n_1-n_2+x-\frac{1}{2}}}\cdot\frac{x}{\left( 1-\frac{1}{x} \right)^{x-\frac{1}{2}}}\\
&\rightarrow \frac{e^4}{2\pi^{5/2}}\phi(N)\cdot\frac{e^{-1}\cdot e^{-1}\cdot (1-c_1-c_2+c_1c_2)\cdot c_1c_2}{e^{-1}\cdot e^{-1}\cdot(c_1-c_1c_2)(c_2-c_1c_2)}\rightarrow 0,
\end{align*}
 where $\phi(N)$ is defined as in (\ref{ratio}). The argument above implies that $\P(|\wh N_C-N|<\varepsilon)\rightarrow 0$ for the $\varepsilon$ we choose.
\end{proof}

\begin{proof}[Proof of Theorem \ref{asym_mbm}]
The MBM estimator and the Lincoln-Petersen CRC estimator take the same form of $n_1n_2/m$, where $m$ follows hypergeometric distribution with $n_2$ ``draws'', and two categories with sizes $n_1$ and $N-n_1$. Refer to the proof for inconsistency of $\wh N_L$ in Theorem \ref{chapman}.
\end{proof}

\singlespacing
\setlength{\bibsep}{0pt}
\bibliographystyle{custom}
\bibliography{ref}

\end{document}